\documentclass[a4paper,11pt]{article}

\usepackage[hiresbb]{graphicx}
\usepackage{amsmath,amsthm,amssymb}
\usepackage{natbib,color,geometry}

\usepackage{hyperref}
\hypersetup{%
 colorlinks = true,
 allcolors = blue,
 urlcolor =  black}

\makeatletter
\let\@fnsymbol\@arabic
\makeatother

\theoremstyle{plain} 
\newtheorem{thm}{Theorem}[section]%
\newtheorem{prop}[thm]{Proposition}%
\newtheorem{lem}[thm]{Lemma}%
\newtheorem{cor}[thm]{Corollary}%
\newtheorem{exam}{Example}%
\newtheorem{rem}{Remark}%

\numberwithin{equation}{section}

\def\ve{ \varepsilon }
\def\rk{ {\mathrm{rank}} }
\def\E{ {\mathrm{E}}}
\def\V{ {\mathrm{Cov}}}
\def\MC{\mathcal{C}}

\title{\textbf{Equality between two general ridge estimators and  equivalence of their residual sums of squares}}
\author{Hirai Mukasa\footnote{Joint Graduate School of Mathematics for Innovation, Kyushu University, 744, Motooka, Nishi-ku, Fukuoka 819-0395, Japan.
mail: \url{mukasa.hirai.075@s.kyushu-u.ac.jp}} \ and  Koji Tsukuda\footnote{Faculty of Mathematics, Kyushu University, 744, Motooka, Nishi-ku, Fukuoka 819-0395, Japan. mail: \url{tsukuda@math.kyushu-u.ac.jp}}}
\date{}

\begin{document}
\maketitle

\begin{abstract}
General ridge estimators are typical linear estimators in a general linear model.
The class of them includes some shrinkage estimators in addition to classical linear unbiased estimators such as the ordinary least squares estimator and the weighted least squares estimator.
We derive necessary and sufficient conditions under which two general ridge estimators coincide. 
In particular, two noteworthy conditions are added to those from previous studies.
The first condition is given as a seemingly column space relationship to the covariance matrix of the error term, and the second one is based on the biases of general ridge estimators.
Another problem studied in this paper is to derive an equivalence condition such that equality between two residual sums of squares holds when general ridge estimators are considered.
\\
\\
\textbf{Keywords}: General linear model, Least squares estimator, Residual sum of squares, Ridge estimator\\
\textbf{Mathematics Subject Classification}: 62J05, 62F10, 62J07
\end{abstract}

\section{Introduction}\label{sec:1}	
For positive integers $m_1$ and $m_2$, let $\mathcal{M}(m_1,m_2)$ denote the set of $m_1 \times m_2$ matrices. 
For a positive integer $m$, let $\mathcal{S}^+(m)$ and $\mathcal{S}^N(m)$ denote the sets of $m \times m$ positive definite and positive semidefinite matrices, respectively. 
Furthermore, let $\boldsymbol{I}_m \in \mathcal{S}^+(m)$ denote the $m\times m$ identity matrix. The transpose, rank, and column space of a matrix $\boldsymbol{M}$ are denoted by $\boldsymbol{M}^\top$, $\rk(\boldsymbol{M})$, and $\MC(\boldsymbol{M})$, respectively. 
For matrices $\boldsymbol{P}$ and $\boldsymbol{Q}$ with the same number of rows, $(\boldsymbol{P} \quad \boldsymbol{Q})$ denotes the partitioned matrix with $\boldsymbol{P}$ and $\boldsymbol{Q}$ as submatrices. 
The expectation vector and covariance matrix of an $m \times 1$ random variable vector $\boldsymbol{u}$ are denoted by $\E[\boldsymbol{u}]$ and $\V(\boldsymbol{u})$, respectively.  

Let $n$ and $k$ be positive integers satisfying $n > k$. 
Throughout this paper, for an observable random vector $\boldsymbol{y} \in \mathbb{R}^n$, let us consider the general linear model
\begin{equation}\label{glm}
 \boldsymbol{y} = \boldsymbol{X} \boldsymbol{\beta} +\boldsymbol{\ve}, \quad \E[\boldsymbol{\ve}] = \boldsymbol{0}, \quad \V(\boldsymbol{\ve}) = \sigma^2 \boldsymbol{\Omega},  
\end{equation}
where  $\boldsymbol{X} \in \mathcal{M}(n,k)$ is a known non-random matrix with $\rk(\boldsymbol{X}) = k$, $\boldsymbol{\beta} \in \mathbb{R}^k$ is an unknown vector, $\boldsymbol{\ve} \in \mathbb{R}^n$ is an unobservable random vector, $\sigma^2 \in (0, \infty)$ is an unknown constant, and $\boldsymbol{\Omega} \in \mathcal{S}^+(n)$ is a known positive definite matrix. 
A matrix $\boldsymbol{Z} \in \mathcal{M}(n,n-k)$ satisfying $\boldsymbol{X}^\top \boldsymbol{Z} = \boldsymbol{0}$ and $\rk(\boldsymbol{Z}) = n-k$ will be fixed throughout the paper. 
To simplify the notation, let us denote $\boldsymbol{N}  = \boldsymbol{I}_n - \boldsymbol{X} (\boldsymbol{X}^\top \boldsymbol{X})^{-1} \boldsymbol{X}^\top \left(= \boldsymbol{Z} (\boldsymbol{Z}^\top \boldsymbol{Z})^{-1} \boldsymbol{Z}^\top\right)$. 
In this paper, given two general ridge estimators for $\boldsymbol{\beta}$ in \eqref{glm}, we first discuss necessary and sufficient conditions for their equality. 
We also discuss a condition on $\boldsymbol{\Omega}$ under which an equality between two (generalized) residual sums of squares holds.

When linear unbiased estimation of $\boldsymbol{\beta}$ in the general linear model \eqref{glm} is considered, it is sufficient to discuss the class of weighted least squares estimators of the form
\[
\hat{\boldsymbol{\beta}}(\boldsymbol{\Phi},\boldsymbol{0}) =  (\boldsymbol{X}^\top \boldsymbol{\Phi}^{-1} \boldsymbol{X})^{-1} \boldsymbol{X}^\top \boldsymbol{\Phi}^{-1} \boldsymbol{y}, \quad \boldsymbol{\Phi} \in \mathcal{S}^+(n).
\]
Let $\mathcal{D} = \{ \hat{\boldsymbol{\beta}}(\boldsymbol{\Phi}, \boldsymbol{0});  \boldsymbol{\Phi} \in \mathcal{S}^+(n)\}$ denote the class of weighted least squares estimators. 
Then, $\hat{\boldsymbol{\beta}} (\boldsymbol{\Omega}, \boldsymbol{0})$ is optimal in $\mathcal{D}$ in terms of L\"{o}wner partial ordering with respect to the matrix valued risk function
\begin{align*}
\mathbb{R}^k \times \mathcal{D} \ni (\boldsymbol{\beta}, \delta) \mapsto \E\left[ (\boldsymbol{\beta} - \delta) (\boldsymbol{\beta} - \delta)^\top\right].
\end{align*}  
In other words, $\hat{\boldsymbol{\beta}} (\boldsymbol{\Omega}, \boldsymbol{0})$ is the best linear unbiased estimator (BLUE). In the class $\mathcal{D}$, the efficiency of the ordinary least square estimator $\hat{\boldsymbol{\beta}}(\boldsymbol{I}_n, \boldsymbol{0})$ is a classical problem, and the optimality of $\hat{\boldsymbol{\beta}} (\boldsymbol{I}_n, \boldsymbol{0})$ has been intensively discussed. 
\cite{RefA48} mentioned that a condition under which $\hat{\boldsymbol{\beta}}(\boldsymbol{I}_n, \boldsymbol{0})$ is identically equal to $\hat{\boldsymbol{\beta}} (\boldsymbol{\Omega}, \boldsymbol{0})$, that is, the equality
\begin{equation}\label{OBE}
 \hat{\boldsymbol{\beta}}(\boldsymbol{\Omega}, \boldsymbol{0}) = \hat{\boldsymbol{\beta}} (\boldsymbol{I}_n, \boldsymbol{0}) \quad \mbox{for any} \ \ \boldsymbol{y} \in \mathbb{R}^n
\end{equation}
holds, is that the column vector of $\boldsymbol{X}$ can be expressed as a linear combination of eigenvectors of $\boldsymbol{\Omega}$.
Since then, conditions for \eqref{OBE} to hold have been thoroughly investigated under general settings in many studies, including \cite{RefR67}, \cite{RefW67}, \cite{RefZ67}, and \cite{RefK68}.
For more details and other studies, see, for example, \cite{RefPS89} and \cite{RefKK04}. 
\cite{RefR67} proved that \eqref{OBE} holds if and only if $\boldsymbol{\Omega}$ is of the form
\begin{equation}\label{Rcond}
\boldsymbol{\Omega} = \boldsymbol{X} \boldsymbol{\Gamma} \boldsymbol{X}^\top + \boldsymbol{Z} \boldsymbol{\Delta} \boldsymbol{Z}^\top
\quad 
\mbox{for some} \ \ \boldsymbol{\Gamma} \in \mathcal{S}^+(k), \  \boldsymbol{\Delta} \in \mathcal{S}^+(n-k) ,
\end{equation}
where \eqref{Rcond} is the form mentioned by \cite{RefG70}.
Additionally, \cite{RefK68} showed that \eqref{OBE} is equivalent to a column space relationship
\begin{equation}\label{Kcond1}
\MC(\boldsymbol{X}) = \MC(\boldsymbol{\Omega} \boldsymbol{X}),
\end{equation}
which is also equivalent to
\begin{equation}\label{Kcond2}
\boldsymbol{X} = \boldsymbol{\Omega} \boldsymbol{X} \boldsymbol{G}
\quad 
\mbox{for some} \ \ \boldsymbol{G} \in \mathcal{M}(k,k),
\end{equation}
because $\boldsymbol{\Omega}$ is nonsingular.

If we consider linear estimators that are not necessarily unbiased, general ridge estimators of the form
\[
\hat{\boldsymbol{\beta}}(\boldsymbol{\Phi},\boldsymbol{K}) =  (\boldsymbol{X}^\top \boldsymbol{\Phi}^{-1} \boldsymbol{X} + \boldsymbol{K})^{-1} \boldsymbol{X}^\top \boldsymbol{\Phi}^{-1} \boldsymbol{y}, \quad 
\boldsymbol{\Phi} \in \mathcal{S}^+(n), \ \boldsymbol{K} \in \mathcal{S}^N(k)
\]
play a special role.
In this paper, we consider that $\boldsymbol{\Phi} \in \{ \boldsymbol{I}_n, \boldsymbol{\Omega} \}$.
For example,  $\boldsymbol{K} = \boldsymbol{0}$ corresponds to a weighted least squares estimator, $\boldsymbol{K} = \lambda \boldsymbol{I}_k$ to an ordinary ridge estimator $(\lambda>0)$, and $\boldsymbol{K} = \rho \boldsymbol{X}^\top \boldsymbol{\Phi}^{-1} \boldsymbol{X}$ to a typical shrinkage estimator $(\rho > 0)$ in $\hat{\boldsymbol{\beta}}(\boldsymbol{\Phi},\boldsymbol{K})$.
For more details, see \cite{RefHK70} and \cite{RefG03}. 
When $\boldsymbol{K}$ is positive definite, the so-called Kuks--Olman estimator $\hat{\boldsymbol{\beta}}(\boldsymbol{\Omega}, \boldsymbol{K})$ is derived as an optimal estimator from a minimax principle under the squared risk function in certain special cases.
In the theory of linear inference, properties such as linear sufficiency and admissibility in the class of linear estimators are of interest; see, for example, \cite{RefC66}, \cite{RefR76}, and \cite{RefD83}.
As shown by \cite{RefR76} and \cite{RefM96}, the general ridge estimator $\hat{\boldsymbol{\beta}}(\boldsymbol{\Omega},\boldsymbol{K})$ is linearly sufficient and admissible within the class of linear estimators under the squared loss function, and linear estimators satisfying these properties simultaneously are also general ridge estimators.
For properties of general ridge estimators, see, for example, \cite{RefL75}, \cite{RefB75}, \cite{RefR76}, \cite{RefM96}, \cite{RefG98}, \cite{RefAS00,RefAS11}, and \cite{RefGM04}. 
Hence, it is fundamental to consider general ridge estimators as linear estimators, and we wish to extend \eqref{OBE} for given $\boldsymbol{K}_1, \boldsymbol{K}_2 \in \mathcal{S}^N(k)$ by deriving necessary and sufficient conditions for the equality
\begin{equation}\label{R2E}
 \hat{\boldsymbol{\beta}}(\boldsymbol{\Omega}, \boldsymbol{K}_1) = \hat{\boldsymbol{\beta}} (\boldsymbol{I}_n, \boldsymbol{K}_2) \quad \mbox{for any} \ \ \boldsymbol{y} \in \mathbb{R}^n
\end{equation}
to hold. Such conditions are also of interest because when \eqref{R2E} holds, properties of $\hat{\boldsymbol{\beta}}(\boldsymbol{\Omega}, \boldsymbol{K}_1)$ such as linear sufficiency, linear admissibility, and minimaxity in some situations are shared by $\hat{\boldsymbol{\beta}} (\boldsymbol{I}_n, \boldsymbol{K}_2)$.
\cite{RefTK20} derived an equivalence condition for \eqref{R2E} that extends \eqref{Rcond}. 
In Section~\ref{sec:3}, we discuss necessary and sufficient conditions for \eqref{R2E} in more detail and give two new conditions.
The first one is a condition corresponding to \eqref{Kcond2} and the second one is a condition based on the biases of $\hat{\boldsymbol{\beta}}(\boldsymbol{\Omega}, \boldsymbol{K}_1)$ and $\hat{\boldsymbol{\beta}}(\boldsymbol{I}_n, \boldsymbol{K}_2)$. 
In addition, we derive necessary conditions for \eqref{R2E} under the specific assumption that $\boldsymbol{K}_1$ and $\boldsymbol{K}_2$ are idempotent matrices.

Furthermore, if we consider a quadratic estimator of $\sigma^2$, particularly when assuming a normal distribution of the observations, we often use the (generalized) residual sums of squares of the estimator $\hat{\boldsymbol{\beta}}(\boldsymbol{\Phi},\boldsymbol{0})$
\begin{align*}
RSS(\boldsymbol{\Phi},\boldsymbol{0}) &= \left\| \boldsymbol{\Phi}^{-1/2} \left( \boldsymbol{y} - \boldsymbol{X} \hat{\boldsymbol{\beta}}(\boldsymbol{\Phi},\boldsymbol{0}) \right) \right\|^2\\
&=\left(\boldsymbol{y} - \boldsymbol{X} \hat{\boldsymbol{\beta}}(\boldsymbol{\Phi},\boldsymbol{0})\right)^\top \boldsymbol{\Phi}^{-1} \left(\boldsymbol{y} - \boldsymbol{X} \hat{\boldsymbol{\beta}}(\boldsymbol{\Phi},\boldsymbol{0})\right), \quad
\boldsymbol{\Phi} \in \mathcal{S}^+(n),    
\end{align*}
where $\| \cdot \|$ stands for the Euclidean norm.
Then, necessary and sufficient condition for the two equalities $\hat{\boldsymbol{\beta}}(\boldsymbol{\Omega},\boldsymbol{0}) = \hat{\boldsymbol{\beta}} (\boldsymbol{I}_n,\boldsymbol{0})$ and $RSS(\boldsymbol{\Omega},\boldsymbol{0}) = RSS(\boldsymbol{I}_n,\boldsymbol{0})$ to hold simultaneously for any $\boldsymbol{y} \in \mathbb{R}^n$ are also of interest. 
In fact, \cite{RefK68} and \cite{RefK80} dealt with this topic.
Moreover, in terms of the likelihood ratio test for the sphericity of the covariance matrix $\V(\boldsymbol{y}) = \sigma^2 \boldsymbol{\Omega}$ of the observations, the condition for the equality
\begin{equation}\label{RSE}
RSS(\boldsymbol{\Omega},\boldsymbol{0}) = RSS(\boldsymbol{I}_n,\boldsymbol{0})  \quad \mbox{for any} \ \ \boldsymbol{y} \in \mathbb{R}^n 
\end{equation}
to hold is of interest. 
\cite{RefK80} proved that \eqref{RSE} holds if and only if $\boldsymbol{\Omega}$ is of the form
\begin{equation}\label{Kcond}
\boldsymbol{\Omega} = \boldsymbol{N} + \boldsymbol{\Lambda}  - \boldsymbol{N} \boldsymbol{\Lambda} \boldsymbol{N}
\quad 
\mbox{for some symmetric} \ \  \boldsymbol{\Lambda} \in \mathcal{M}(n,n)
\end{equation}
from more general results.
In Section~\ref{sec:2}, we rewrite the condition \eqref{Kcond} into an equivalent form to extend it.
Hereafter, we denote the residual sum of squares under the general ridge estimator $\hat{{\boldsymbol{\beta}}}(\boldsymbol{\Phi},\boldsymbol{K})$ by
\[ RSS(\boldsymbol{\Phi},\boldsymbol{K}) = \left\| \boldsymbol{\Phi}^{-1/2} \left( \boldsymbol{y} - \boldsymbol{X} \hat{\boldsymbol{\beta}}(\boldsymbol{\Phi}, \boldsymbol{K}) \right) \right\|^2, \quad
\boldsymbol{\Phi} \in \mathcal{S}^+(n), \ 
\boldsymbol{K} \in \mathcal{S}^N(k).  \] 
The residual sum of squares $ RSS(\boldsymbol{\Phi},\boldsymbol{K})$ is often computed on data analysis even when $\boldsymbol{K} \neq \boldsymbol{0}$; see, for example, \cite{RefHK70}. 
In Section~\ref{sec:4}, for given $\boldsymbol{K}_1, \boldsymbol{K}_2 \in \mathcal{S}^N(k)$, we derive a necessary and sufficient condition for the equality
\begin{equation}\label{GRSE}
RSS(\boldsymbol{\Omega},\boldsymbol{K}_1) = RSS(\boldsymbol{I}_n, \boldsymbol{K}_2) \quad \mbox{for any} \ \ \boldsymbol{y} \in \mathbb{R}^n 
\end{equation}
to hold as a generalization of \eqref{RSE}.

\section{Preliminaries}\label{sec:2}
In this section, we make preparations for some results that are used in subsequent sections.

First, we introduce a useful representation of $\boldsymbol{\Omega}$ and $\boldsymbol{\Omega}^{-1}$; for these results, see \cite{RefK98}.

\begin{lem}\label{OOI}
It holds that
\begin{equation}\label{Orep}
 \boldsymbol{\Omega} 
= \left( \begin{matrix} \boldsymbol{X} & \boldsymbol{Z} \end{matrix}\right) \left(\begin{matrix} \boldsymbol{\Gamma} & \boldsymbol{\Xi} \\ \boldsymbol{\Xi}^\top & \boldsymbol{\Delta} \end{matrix} \right) \left( \begin{matrix} \boldsymbol{X}^\top \\ \boldsymbol{Z}^\top \end{matrix}\right)
\end{equation}
for some  $\boldsymbol{\Gamma} \in \mathcal{S}^+(k)$, $\boldsymbol{\Xi} \in \mathcal{M}(k,n-k)$, $\boldsymbol{\Delta} \in \mathcal{S}^+(n-k)$.
Moreover, when $\boldsymbol{\Omega}$ is expressed as \eqref{Orep}, it holds that
\[
\boldsymbol{\Omega}^{-1} 
= \left( \begin{matrix} \boldsymbol{X}(\boldsymbol{X}^\top \boldsymbol{X})^{-1} & \boldsymbol{Z}(\boldsymbol{Z}^\top \boldsymbol{Z})^{-1} \end{matrix}\right) \left(\begin{matrix} \boldsymbol{A} & \boldsymbol{B} \\ \boldsymbol{C} & \boldsymbol{D}\end{matrix} \right) \left( \begin{matrix} (\boldsymbol{X}^\top \boldsymbol{X})^{-1} \boldsymbol{X}^\top \\ (\boldsymbol{Z}^\top \boldsymbol{Z})^{-1} \boldsymbol{Z}^\top \end{matrix}\right),
\]
where
\begin{equation} \label{ABCD}
\boldsymbol{A} = (\boldsymbol{\Gamma} - \boldsymbol{\Xi} \boldsymbol{\Delta}^{-1} \boldsymbol{\Xi}^\top)^{-1}, \ 
\boldsymbol{B} = -\boldsymbol{A}\boldsymbol{\Xi} \boldsymbol{\Delta}^{-1}, \
\boldsymbol{C} = \boldsymbol{B}^\top, \
\boldsymbol{D} = \boldsymbol{\Delta}^{-1} + \boldsymbol{B}^\top \boldsymbol{A}^{-1} \boldsymbol{B}.
\end{equation}
\end{lem}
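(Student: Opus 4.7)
The plan is to exploit the fact that $(\boldsymbol{X} \ \boldsymbol{Z})$ is a nonsingular $n\times n$ matrix (since $\rk(\boldsymbol{X}) = k$, $\rk(\boldsymbol{Z}) = n-k$, and $\boldsymbol{X}^\top \boldsymbol{Z} = \boldsymbol{0}$ forces the columns to be linearly independent), so that its columns form a basis of $\mathbb{R}^n$ and any $n\times n$ matrix admits a representation with respect to this basis on both sides.

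For the first assertion, I would define the middle block as
\[
\boldsymbol{M} = (\boldsymbol{X} \ \boldsymbol{Z})^{-1} \boldsymbol{\Omega} \left((\boldsymbol{X} \ \boldsymbol{Z})^{-1}\right)^\top,
\]
so that \eqref{Orep} becomes tautological. Since $\boldsymbol{M}$ is a congruence transformation of $\boldsymbol{\Omega} \in \mathcal{S}^+(n)$, it belongs to $\mathcal{S}^+(n)$ and in particular its diagonal blocks are positive definite of the appropriate sizes. Partitioning $\boldsymbol{M}$ conformably with $(\boldsymbol{X} \ \boldsymbol{Z})$ then yields $\boldsymbol{\Gamma} \in \mathcal{S}^+(k)$, $\boldsymbol{\Delta} \in \mathcal{S}^+(n-k)$, and an off-diagonal block $\boldsymbol{\Xi} \in \mathcal{M}(k,n-k)$.

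For the second assertion, the orthogonality $\boldsymbol{X}^\top \boldsymbol{Z} = \boldsymbol{0}$ makes $(\boldsymbol{X} \ \boldsymbol{Z})^\top (\boldsymbol{X} \ \boldsymbol{Z})$ block diagonal, so I can explicitly compute
\[
(\boldsymbol{X} \ \boldsymbol{Z})^{-1} = \begin{pmatrix} (\boldsymbol{X}^\top \boldsymbol{X})^{-1} \boldsymbol{X}^\top \\ (\boldsymbol{Z}^\top \boldsymbol{Z})^{-1} \boldsymbol{Z}^\top \end{pmatrix}, \qquad \left((\boldsymbol{X} \ \boldsymbol{Z})^\top\right)^{-1} = \left(\boldsymbol{X}(\boldsymbol{X}^\top \boldsymbol{X})^{-1} \ \boldsymbol{Z}(\boldsymbol{Z}^\top \boldsymbol{Z})^{-1}\right).
\]
Inverting \eqref{Orep} then reduces to inverting the middle block. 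The standard Schur-complement formula for the inverse of a symmetric $2\times 2$ block matrix with invertible lower-right block $\boldsymbol{\Delta}$ produces exactly $\boldsymbol{A}$, $\boldsymbol{B}$, $\boldsymbol{C}$, $\boldsymbol{D}$ as in \eqref{ABCD}; the invertibility of the Schur complement $\boldsymbol{\Gamma} - \boldsymbol{\Xi} \boldsymbol{\Delta}^{-1} \boldsymbol{\Xi}^\top$ is guaranteed by positive definiteness of the middle block.

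There is no real obstacle here: both parts are essentially a change-of-basis observation combined with the Schur-complement identity. The only minor care needed is to justify the invertibility of $(\boldsymbol{X}\ \boldsymbol{Z})$ and of $\boldsymbol{\Gamma} - \boldsymbol{\Xi}\boldsymbol{\Delta}^{-1}\boldsymbol{\Xi}^\top$, both of which follow from the standing rank and positive-definiteness hypotheses.
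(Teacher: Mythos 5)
Your proof is correct and matches the route the paper itself implicitly takes: the paper gives no proof (it cites Kurata, 1998), but the remark following the lemma lists exactly the blocks of your matrix $\boldsymbol{M} = (\boldsymbol{X} \ \boldsymbol{Z})^{-1} \boldsymbol{\Omega} \left((\boldsymbol{X} \ \boldsymbol{Z})^{-1}\right)^\top$, namely $\boldsymbol{\Gamma} = (\boldsymbol{X}^\top \boldsymbol{X})^{-1} \boldsymbol{X}^\top \boldsymbol{\Omega} \boldsymbol{X} (\boldsymbol{X}^\top \boldsymbol{X})^{-1}$ and its companions, so your change-of-basis construction is precisely the intended one. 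The inverse formula via the Schur complement of $\boldsymbol{\Delta}$, with invertibility of $\boldsymbol{\Gamma} - \boldsymbol{\Xi}\boldsymbol{\Delta}^{-1}\boldsymbol{\Xi}^\top$ secured by positive definiteness of the middle block, is likewise the standard and correct argument.
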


\begin{rem}
The representation \eqref{Orep} holds when $\boldsymbol{\Gamma}, \boldsymbol{\Xi}, \boldsymbol{\Delta}$ are chosen as follows:
\begin{align*}
&\boldsymbol{\Gamma} = (\boldsymbol{X}^\top \boldsymbol{X})^{-1} \boldsymbol{X}^\top \boldsymbol{\Omega} \boldsymbol{X} (\boldsymbol{X}^\top \boldsymbol{X})^{-1}, \ 
\boldsymbol{\Xi} =  (\boldsymbol{X}^\top \boldsymbol{X})^{-1} \boldsymbol{X}^\top \boldsymbol{\Omega} \boldsymbol{Z} (\boldsymbol{Z}^\top \boldsymbol{Z})^{-1},\\
& \boldsymbol{\Delta} =  (\boldsymbol{Z}^\top \boldsymbol{Z})^{-1} \boldsymbol{Z}^\top \boldsymbol{\Omega} \boldsymbol{Z} (\boldsymbol{Z}^\top \boldsymbol{Z})^{-1}.
\end{align*}
\end{rem}

\begin{rem}
The equality \eqref{Rcond} is equivalent to $\boldsymbol{\Xi} = \boldsymbol{0}$ in \eqref{Orep} and also to $\boldsymbol{X}^\top \boldsymbol{\Omega} \boldsymbol{Z} = \boldsymbol{0}$ and to $\boldsymbol{X}^\top \boldsymbol{\Omega}^{-1} \boldsymbol{Z} = \boldsymbol{0}$.
\end{rem}

The following Proposition~\ref{prop1} is an equivalent rewriting of the condition \eqref{Kcond} of \cite{RefK80} using Lemma~\ref{OOI}, which will be used later.

\begin{prop}\label{prop1}
The equality \eqref{RSE} holds if and only if $\boldsymbol{\Delta} = (\boldsymbol{Z}^\top \boldsymbol{Z})^{-1}$ in \eqref{Orep}.
\end{prop}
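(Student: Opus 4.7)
The plan is to translate the characterization \eqref{Kcond} of Kruskal (1980) directly into the language of the block decomposition \eqref{Orep}. Let $\boldsymbol{M} = \boldsymbol{I}_n - \boldsymbol{N} = \boldsymbol{X}(\boldsymbol{X}^\top\boldsymbol{X})^{-1}\boldsymbol{X}^\top$, so that $\boldsymbol{M}$ and $\boldsymbol{N}$ are complementary orthogonal projectors satisfying $\boldsymbol{M}\boldsymbol{X} = \boldsymbol{X}$, $\boldsymbol{N}\boldsymbol{X} = \boldsymbol{0}$, $\boldsymbol{N}\boldsymbol{Z} = \boldsymbol{Z}$, $\boldsymbol{M}\boldsymbol{Z} = \boldsymbol{0}$, and $\boldsymbol{I}_n = \boldsymbol{M} + \boldsymbol{N}$. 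Applying this decomposition to a symmetric $\boldsymbol{\Lambda}$ via $\boldsymbol{\Lambda} = (\boldsymbol{M}+\boldsymbol{N})\boldsymbol{\Lambda}(\boldsymbol{M}+\boldsymbol{N})$ yields
\[
\boldsymbol{\Lambda} - \boldsymbol{N}\boldsymbol{\Lambda}\boldsymbol{N} = \boldsymbol{M}\boldsymbol{\Lambda}\boldsymbol{M} + \boldsymbol{M}\boldsymbol{\Lambda}\boldsymbol{N} + \boldsymbol{N}\boldsymbol{\Lambda}\boldsymbol{M},
\]
so condition \eqref{Kcond} is equivalent to the existence of a symmetric $\boldsymbol{\Lambda}$ with $\boldsymbol{\Omega} = \boldsymbol{N} + \boldsymbol{M}\boldsymbol{\Lambda}\boldsymbol{M} + \boldsymbol{M}\boldsymbol{\Lambda}\boldsymbol{N} + \boldsymbol{N}\boldsymbol{\Lambda}\boldsymbol{M}$. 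In particular, sandwiching by $\boldsymbol{N}$ on both sides, this forces $\boldsymbol{N}\boldsymbol{\Omega}\boldsymbol{N} = \boldsymbol{N}$.

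For the ``only if'' direction, I would read off from \eqref{Orep} that $\boldsymbol{N}\boldsymbol{\Omega}\boldsymbol{N} = \boldsymbol{Z}\boldsymbol{\Delta}\boldsymbol{Z}^\top$, since the other three terms vanish after multiplication by $\boldsymbol{N}$ (using $\boldsymbol{X}^\top\boldsymbol{N} = \boldsymbol{0}$). Hence \eqref{Kcond} entails $\boldsymbol{Z}\boldsymbol{\Delta}\boldsymbol{Z}^\top = \boldsymbol{N} = \boldsymbol{Z}(\boldsymbol{Z}^\top\boldsymbol{Z})^{-1}\boldsymbol{Z}^\top$, and because $\boldsymbol{Z}$ has full column rank, multiplying on the left by $(\boldsymbol{Z}^\top\boldsymbol{Z})^{-1}\boldsymbol{Z}^\top$ and on the right by its transpose gives $\boldsymbol{\Delta} = (\boldsymbol{Z}^\top\boldsymbol{Z})^{-1}$.

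For the converse, assuming $\boldsymbol{\Delta} = (\boldsymbol{Z}^\top\boldsymbol{Z})^{-1}$, I would exhibit an explicit symmetric $\boldsymbol{\Lambda}$ realizing \eqref{Kcond}. The natural candidate is
\[
\boldsymbol{\Lambda} = \boldsymbol{X}\boldsymbol{\Gamma}\boldsymbol{X}^\top + \boldsymbol{X}\boldsymbol{\Xi}\boldsymbol{Z}^\top + \boldsymbol{Z}\boldsymbol{\Xi}^\top\boldsymbol{X}^\top,
\]
which is symmetric because $\boldsymbol{\Gamma}$ is symmetric. Since $\boldsymbol{X}^\top\boldsymbol{N} = \boldsymbol{0}$, a short direct calculation gives $\boldsymbol{N}\boldsymbol{\Lambda}\boldsymbol{N} = \boldsymbol{0}$, hence $\boldsymbol{N} + \boldsymbol{\Lambda} - \boldsymbol{N}\boldsymbol{\Lambda}\boldsymbol{N} = \boldsymbol{N} + \boldsymbol{\Lambda}$, which coincides with the right-hand side of \eqref{Orep} under our hypothesis. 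Invoking the equivalence \eqref{RSE}$\Leftrightarrow$\eqref{Kcond} then closes the proof.

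There is essentially no hard step here; the argument is a bookkeeping exercise in the $\boldsymbol{M}/\boldsymbol{N}$ block decomposition, pinpointing that Kruskal's condition constrains precisely the $\boldsymbol{N}(\cdot)\boldsymbol{N}$ corner of $\boldsymbol{\Omega}$. The only subtleties are to cite Kruskal (1980) correctly and to use the full column rank of $\boldsymbol{Z}$ to pass from $\boldsymbol{Z}\boldsymbol{\Delta}\boldsymbol{Z}^\top = \boldsymbol{Z}(\boldsymbol{Z}^\top\boldsymbol{Z})^{-1}\boldsymbol{Z}^\top$ to $\boldsymbol{\Delta} = (\boldsymbol{Z}^\top\boldsymbol{Z})^{-1}$.
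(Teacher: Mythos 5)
Your proof is correct, but it takes a genuinely different route from the paper's. You treat the equivalence \eqref{RSE}$\Leftrightarrow$\eqref{Kcond} as a black box from the literature and then translate \eqref{Kcond} into the block decomposition \eqref{Orep}: the sandwich identity $\boldsymbol{\Lambda}-\boldsymbol{N}\boldsymbol{\Lambda}\boldsymbol{N}=\boldsymbol{M}\boldsymbol{\Lambda}\boldsymbol{M}+\boldsymbol{M}\boldsymbol{\Lambda}\boldsymbol{N}+\boldsymbol{N}\boldsymbol{\Lambda}\boldsymbol{M}$ shows \eqref{Kcond} constrains only the $\boldsymbol{N}(\cdot)\boldsymbol{N}$ corner, forcing $\boldsymbol{N}\boldsymbol{\Omega}\boldsymbol{N}=\boldsymbol{N}$, i.e.\ $\boldsymbol{Z}\boldsymbol{\Delta}\boldsymbol{Z}^\top=\boldsymbol{Z}(\boldsymbol{Z}^\top\boldsymbol{Z})^{-1}\boldsymbol{Z}^\top$, and full column rank of $\boldsymbol{Z}$ gives $\boldsymbol{\Delta}=(\boldsymbol{Z}^\top\boldsymbol{Z})^{-1}$; conversely your explicit $\boldsymbol{\Lambda}=\boldsymbol{X}\boldsymbol{\Gamma}\boldsymbol{X}^\top+\boldsymbol{X}\boldsymbol{\Xi}\boldsymbol{Z}^\top+\boldsymbol{Z}\boldsymbol{\Xi}^\top\boldsymbol{X}^\top$ satisfies $\boldsymbol{N}\boldsymbol{\Lambda}\boldsymbol{N}=\boldsymbol{0}$ and realizes \eqref{Kcond}. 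All of these steps check out (note \eqref{Kcond} requires only symmetry of $\boldsymbol{\Lambda}$, which your candidate has). The paper instead proves the proposition from scratch: it writes $RSS(\boldsymbol{\Omega},\boldsymbol{0})-RSS(\boldsymbol{I}_n,\boldsymbol{0})$ as a quadratic form, reduces the identity $\boldsymbol{L}^\top\boldsymbol{\Omega}^{-1}\boldsymbol{L}=\boldsymbol{N}\boldsymbol{N}$ to its $(\boldsymbol{X},\boldsymbol{Z})$ blocks using the explicit $\boldsymbol{\Omega}^{-1}$ of Lemma~\ref{OOI}, finds the $\boldsymbol{X}^\top(\cdot)\boldsymbol{X}$ and $\boldsymbol{X}^\top(\cdot)\boldsymbol{Z}$ blocks automatically vanish, and extracts $\boldsymbol{\Delta}=(\boldsymbol{Z}^\top\boldsymbol{Z})^{-1}$ from the $\boldsymbol{Z}^\top(\cdot)\boldsymbol{Z}$ block; it relegates the equivalence \eqref{Kcond}$\Leftrightarrow\boldsymbol{\Delta}=(\boldsymbol{Z}^\top\boldsymbol{Z})^{-1}$ (essentially your bookkeeping, done by raw algebra rather than your cleaner projector sandwiching) to a separate remark. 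Your route is shorter but leans on the external theorem of \cite{RefK80}; the paper's self-contained computation buys more, since its $\boldsymbol{L}$-versus-$\boldsymbol{N}$ block analysis is exactly the template reused for the generalization to ridge residual sums of squares in Theorem~\ref{thm2}, where no off-the-shelf characterization is available. One genuine slip to fix: you attribute \eqref{Kcond} to ``Kruskal (1980)''; the condition is due to Kariya \cite{RefK80}, while Kruskal \cite{RefK68} concerns the BLUE/OLS equality \eqref{OBE} --- ironic, given that you flag citing the source ``correctly'' as the main subtlety.
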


\begin{proof}
For simplicity, the notation in \eqref{ABCD} is used. 
Let us denote
\begin{align*}
\boldsymbol{L}=\boldsymbol{I}_n-\boldsymbol{X}\boldsymbol{A}^{-1}\boldsymbol{X}^{\top}\boldsymbol{\Omega}^{-1}.    
\end{align*}
Since
\begin{align*}
\boldsymbol{y}-\boldsymbol{X}\hat{\boldsymbol{\beta}}(\boldsymbol{\Omega},\boldsymbol{0}) &=\boldsymbol{y}-\boldsymbol{X}(\boldsymbol{X}^{\top}\boldsymbol{\Omega}^{-1}\boldsymbol{X})^{-1}\boldsymbol{X}^{\top}\boldsymbol{\Omega}^{-1}\boldsymbol{y}\\
&=\left\{\boldsymbol{I}_n-\boldsymbol{X}(\boldsymbol{X}^{\top}\boldsymbol{\Omega}^{-1}\boldsymbol{X})^{-1}\boldsymbol{X}^{\top}\boldsymbol{\Omega}^{-1}\right\}\boldsymbol{y}\\
&=(\boldsymbol{I}_n-\boldsymbol{X}\boldsymbol{A}^{-1}\boldsymbol{X}^{\top}\boldsymbol{\Omega}^{-1})\boldsymbol{y}\\
&=\boldsymbol{L}\boldsymbol{y}
\end{align*}
and
\begin{align*}
\boldsymbol{y}-\boldsymbol{X}\hat{\boldsymbol{\beta}}(\boldsymbol{I}_n,\boldsymbol{0}) &=\boldsymbol{y}-\boldsymbol{X}(\boldsymbol{X}^{\top}\boldsymbol{X})^{-1}\boldsymbol{X}^{\top}\boldsymbol{y}\\
&=\left\{\boldsymbol{I}_n-\boldsymbol{X}(\boldsymbol{X}^{\top}\boldsymbol{X})^{-1}\boldsymbol{X}^{\top}\right\}\boldsymbol{y}\\
&=\boldsymbol{N}\boldsymbol{y} ,
\end{align*}
the residual sums of squares $RSS(\boldsymbol{\Omega},\boldsymbol{0})$ and $RSS(\boldsymbol{I}_n,\boldsymbol{0})$ can be rewritten as
\begin{align*}
&RSS(\boldsymbol{\Omega},\boldsymbol{0})=\left(\boldsymbol{y}-\boldsymbol{X}\hat{\boldsymbol{\beta}}(\boldsymbol{\Omega},\boldsymbol{0})\right)^{\top}\boldsymbol{\Omega}^{-1}\left(\boldsymbol{y}-\boldsymbol{X}\hat{\boldsymbol{\beta}}(\boldsymbol{\Omega},\boldsymbol{0})\right)=\boldsymbol{y}^\top  \boldsymbol{L}^\top \boldsymbol{\Omega}^{-1} \boldsymbol{L} \boldsymbol{y}
\end{align*}
and
\begin{align*}
&RSS(\boldsymbol{I}_n,\boldsymbol{0})=\left(\boldsymbol{y}-\boldsymbol{X}\hat{\boldsymbol{\beta}}(\boldsymbol{I}_n,\boldsymbol{0})\right)^{\top}\left(\boldsymbol{y}-\boldsymbol{X}\hat{\boldsymbol{\beta}}(\boldsymbol{I}_n,\boldsymbol{0})\right)=\boldsymbol{y}^\top \boldsymbol{N}\boldsymbol{N} \boldsymbol{y} ,
\end{align*}
respectively.
Thus, the problem is to derive a condition under which $\boldsymbol{y}^\top ( \boldsymbol{L}^\top \boldsymbol{\Omega}^{-1} \boldsymbol{L}-\boldsymbol{N}\boldsymbol{N} )\boldsymbol{y}=0$ holds for any $\boldsymbol{y}\in \mathbb{R}^{n}$, which is equivalent to deriving a condition for 
\begin{align}\label{LN}
\boldsymbol{L}^\top \boldsymbol{\Omega}^{-1} \boldsymbol{L}-\boldsymbol{N}\boldsymbol{N} =\boldsymbol{0}.    
\end{align}
Since the matrix $(\boldsymbol{X} \quad \boldsymbol{Z})$ is nonsingular, the equality \eqref{LN} holds if and only if the following three equalities simultaneously hold:
\begin{align}
&\boldsymbol{X}^\top (\boldsymbol{L}^\top \boldsymbol{\Omega}^{-1} \boldsymbol{L}-\boldsymbol{N}\boldsymbol{N}) \boldsymbol{X} =\boldsymbol{0} , \label{XX} \\
&\boldsymbol{X}^\top (\boldsymbol{L}^\top \boldsymbol{\Omega}^{-1} \boldsymbol{L}-\boldsymbol{N}\boldsymbol{N}) \boldsymbol{Z} =\boldsymbol{0} , \label{XZ} \\
&
\boldsymbol{Z}^\top (\boldsymbol{L}^\top \boldsymbol{\Omega}^{-1} \boldsymbol{L}-\boldsymbol{N}\boldsymbol{N}) \boldsymbol{Z} =\boldsymbol{0} . \label{ZZ}
\end{align}
The quantities $\boldsymbol{L}^\top \boldsymbol{\Omega}^{-1} \boldsymbol{L}$ and $\boldsymbol{N}\boldsymbol{N}$ are calculated to be
 \begin{align*}
\boldsymbol{L}^\top \boldsymbol{\Omega}^{-1} \boldsymbol{L} &= (\boldsymbol{I}_n-\boldsymbol{\Omega}^{-1}\boldsymbol{X}\boldsymbol{A}^{-1}\boldsymbol{X}^{\top})\boldsymbol{\Omega}^{-1}(\boldsymbol{I}_n-\boldsymbol{X}\boldsymbol{A}^{-1}\boldsymbol{X}^{\top}\boldsymbol{\Omega}^{-1})\\
&=\boldsymbol{\Omega}^{-1}-2\boldsymbol{\Omega}^{-1}\boldsymbol{X}\boldsymbol{A}^{-1}\boldsymbol{X}^{\top}\boldsymbol{\Omega}^{-1}+\boldsymbol{\Omega}^{-1}\boldsymbol{X}\boldsymbol{A}^{-1}\boldsymbol{X}^{\top}\boldsymbol{\Omega}^{-1}\boldsymbol{X}\boldsymbol{A}^{-1}\boldsymbol{X}^{\top}\boldsymbol{\Omega}^{-1}\\
&=\boldsymbol{\Omega}^{-1}-2\boldsymbol{\Omega}^{-1}\boldsymbol{X}\boldsymbol{A}^{-1}\boldsymbol{X}^{\top}\boldsymbol{\Omega}^{-1}+\boldsymbol{\Omega}^{-1}\boldsymbol{X}\boldsymbol{A}^{-1}\boldsymbol{A}\boldsymbol{A}^{-1}\boldsymbol{X}^{\top}\boldsymbol{\Omega}^{-1}\\
&=\boldsymbol{\Omega}^{-1}-\boldsymbol{\Omega}^{-1}\boldsymbol{X}\boldsymbol{A}^{-1}\boldsymbol{X}^{\top}\boldsymbol{\Omega}^{-1}
 \end{align*}
and
\begin{align*}
\boldsymbol{N}\boldsymbol{N}= \boldsymbol{N} .
\end{align*}
Noting that
\begin{align*}
\boldsymbol{X}^\top \boldsymbol{N} \boldsymbol{X}= \boldsymbol{0}, \ \boldsymbol{X}^\top \boldsymbol{N} \boldsymbol{Z}=\boldsymbol{0}, \ \boldsymbol{Z}^\top \boldsymbol{N} \boldsymbol{Z}= \boldsymbol{Z}^\top \boldsymbol{Z},
\end{align*}
we see that the equalities \eqref{XX} and \eqref{XZ} hold; indeed, they can be rewritten as
\begin{align*}
&\boldsymbol{X}^{\top}(\boldsymbol{\Omega}^{-1}-\boldsymbol{\Omega}^{-1}\boldsymbol{X}\boldsymbol{A}^{-1}\boldsymbol{X}^{\top}\boldsymbol{\Omega}^{-1})\boldsymbol{X}=\boldsymbol{0}\\
\Leftrightarrow \quad &\boldsymbol{X}^{\top}\boldsymbol{\Omega}^{-1}\boldsymbol{X}-\boldsymbol{X}^\top \boldsymbol{\Omega}^{-1}\boldsymbol{X}\boldsymbol{A}^{-1}\boldsymbol{X}^{\top}\boldsymbol{\Omega}^{-1}\boldsymbol{X}=\boldsymbol{0}\\
\Leftrightarrow \quad & \boldsymbol{A}-\boldsymbol{A} \boldsymbol{A}^{-1} \boldsymbol{A} =\boldsymbol{0}
\end{align*}
and as
\begin{align*}
&\boldsymbol{X}^{\top}(\boldsymbol{\Omega}^{-1}-\boldsymbol{\Omega}^{-1}\boldsymbol{X}\boldsymbol{A}^{-1}\boldsymbol{X}^{\top}\boldsymbol{\Omega}^{-1})\boldsymbol{Z}=\boldsymbol{0}\\
\Leftrightarrow \quad &\boldsymbol{X}^{\top}\boldsymbol{\Omega}^{-1}\boldsymbol{Z}-\boldsymbol{X}^\top \boldsymbol{\Omega}^{-1}\boldsymbol{X}\boldsymbol{A}^{-1}\boldsymbol{X}^{\top}\boldsymbol{\Omega}^{-1}\boldsymbol{Z}=\boldsymbol{0}\\
\Leftrightarrow \quad &\boldsymbol{B}-\boldsymbol{A}\boldsymbol{A}^{-1}\boldsymbol{B}=\boldsymbol{0}.
\end{align*}
Finally, \eqref{ZZ} can be rewritten as
\begin{align*}
&\boldsymbol{Z}^{\top}(\boldsymbol{\Omega}^{-1}-\boldsymbol{\Omega}^{-1}\boldsymbol{X}\boldsymbol{A}^{-1}\boldsymbol{X}^{\top}\boldsymbol{\Omega}^{-1})\boldsymbol{Z}=\boldsymbol{Z}^\top \boldsymbol{Z} \\ 
\Leftrightarrow \quad &\boldsymbol{Z}^{\top}\boldsymbol{\Omega}^{-1}\boldsymbol{Z}-\boldsymbol{Z}^\top \boldsymbol{\Omega}^{-1}\boldsymbol{X}\boldsymbol{A}^{-1}\boldsymbol{X}^{\top}\boldsymbol{\Omega}^{-1}\boldsymbol{Z}=\boldsymbol{Z}^\top \boldsymbol{Z}\\
\Leftrightarrow \quad &\boldsymbol{D}-\boldsymbol{C}\boldsymbol{A}^{-1}\boldsymbol{B}=\boldsymbol{Z}^\top \boldsymbol{Z}\\ 
\Leftrightarrow \quad &\boldsymbol{\Delta}^{-1}+\boldsymbol{B}^\top \boldsymbol{A}^{-1}\boldsymbol{B}-\boldsymbol{B}^\top \boldsymbol{A}^{-1}\boldsymbol{B}=\boldsymbol{Z}^\top \boldsymbol{Z}\\
\Leftrightarrow \quad &\boldsymbol{\Delta}=(\boldsymbol{Z}^\top \boldsymbol{Z})^{-1}.
\end{align*}
This completes the proof.
\end{proof}

\begin{rem}\label{remkcond}
It can be directly shown  that \eqref{Kcond} is equivalent to $\boldsymbol{\Delta} = (\boldsymbol{Z}^\top \boldsymbol{Z})^{-1}$ in \eqref{Orep}.
See the following proof.
\end{rem}

\begin{proof}[Proof of Remark~\ref{remkcond}]
To see the sufficiency, suppose that  $\boldsymbol{\Delta}=(\boldsymbol{Z}^{\top}\boldsymbol{Z})^{-1}$ holds. From \eqref{Orep}, $\boldsymbol{\Omega}$ can be rewritten as
\begin{align*}
\boldsymbol{\Omega} &= \boldsymbol{X}\boldsymbol{\Gamma} \boldsymbol{X}^{\top}+\boldsymbol{Z}(\boldsymbol{Z}^{\top}\boldsymbol{Z})^{-1} \boldsymbol{Z}^{\top}+\boldsymbol{X}\boldsymbol{\Xi} \boldsymbol{Z}^{\top}+\boldsymbol{Z}\boldsymbol{\Xi} ^{\top}\boldsymbol{X}^{\top}\\
&= \boldsymbol{X} (\boldsymbol{X}^\top \boldsymbol{X})^{-1} \boldsymbol{X}^\top \boldsymbol{\Omega} \boldsymbol{X} (\boldsymbol{X}^\top \boldsymbol{X})^{-1} \boldsymbol{X}^{\top}+\boldsymbol{N} \\
&+ \boldsymbol{X}(\boldsymbol{X}^\top \boldsymbol{X})^{-1} \boldsymbol{X}^\top \boldsymbol{\Omega} \boldsymbol{Z} (\boldsymbol{Z}^\top \boldsymbol{Z})^{-1}\boldsymbol{Z}^\top+ \boldsymbol{Z}  (\boldsymbol{Z}^\top \boldsymbol{Z})^{-1} \boldsymbol{Z}^\top \boldsymbol{\Omega} \boldsymbol{X} (\boldsymbol{X}^\top \boldsymbol{X})^{-1}\boldsymbol{X}^\top\\
&= (\boldsymbol{I}_n-\boldsymbol{N})\boldsymbol{\Omega} (\boldsymbol{I}_n-\boldsymbol{N}) +\boldsymbol{N} + (\boldsymbol{I}_n-\boldsymbol{N})\boldsymbol{\Omega} \boldsymbol{N} + \boldsymbol{N} \boldsymbol{\Omega} (\boldsymbol{I}_n-\boldsymbol{N})\\
&= \boldsymbol{\Omega} - \boldsymbol{N} \boldsymbol{\Omega} -\boldsymbol{\Omega} \boldsymbol{N}+\boldsymbol{N}\boldsymbol{\Omega} \boldsymbol{N} + \boldsymbol{N} +\boldsymbol{\Omega} \boldsymbol{N} - \boldsymbol{N} \boldsymbol{\Omega} \boldsymbol{N} + \boldsymbol{N} \boldsymbol{\Omega} - \boldsymbol{N} \boldsymbol{\Omega} \boldsymbol{N}\\
&= \boldsymbol{N} + \boldsymbol{\Omega} - \boldsymbol{N}\boldsymbol{\Omega} \boldsymbol{N}.
\end{align*}
By letting $\boldsymbol{\Lambda} = \boldsymbol{\Omega} $, we have \eqref{Kcond}.
Next, to see the necessity, suppose that $\boldsymbol{\Omega}$ is of the form \eqref{Kcond}. From \eqref{Orep}, the equality
\begin{align*}
\boldsymbol{X}\boldsymbol{\Gamma} \boldsymbol{X}^\top + \boldsymbol{X} \boldsymbol{\Xi} \boldsymbol{Z}^\top + \boldsymbol{Z} \boldsymbol{\Xi} ^\top \boldsymbol{X}^\top + \boldsymbol{Z} \boldsymbol{\Delta} \boldsymbol{Z}^\top = \boldsymbol{N} + \boldsymbol{\Lambda} - \boldsymbol{N} \boldsymbol{\Lambda} \boldsymbol{N}    
\end{align*}
holds. Then, premultiplying by $(\boldsymbol{Z}^\top \boldsymbol{Z})^{-1}\boldsymbol{Z}^\top$ and postmultiplying by $\boldsymbol{Z}(\boldsymbol{Z}^\top \boldsymbol{Z})^{-1}$ yield $\boldsymbol{\Delta}= (\boldsymbol{Z}^\top \boldsymbol{Z})^{-1}$.
This completes the proof.
\end{proof}

\section{Equality between two general ridge estimators}\label{sec:3}
In this section, using the results of \cite{RefTK20}, we give a necessary and sufficient condition for an alternative form of \eqref{R2E} and discuss the results. 
The following theorem, the first main result of this paper, provides a condition corresponding to \eqref{Kcond2}.

\begin{thm}\label{thm1}
For given $\boldsymbol{K}_1, \boldsymbol{K}_2 \in \mathcal{S}^N(k)$, the equality \eqref{R2E} holds if and only if
\begin{equation}\label{Mcond1}
 \boldsymbol{X} = \boldsymbol{\Omega} \boldsymbol{X} \boldsymbol{G}, \ \boldsymbol{K}_1 = \boldsymbol{K}_2 \boldsymbol{G}
\quad 
\mbox{for some} \ \ \boldsymbol{G} \in \mathcal{M}(k,k).
\end{equation}
\end{thm}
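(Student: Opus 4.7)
The plan is to unwind \eqref{R2E} directly using the closed-form expressions for the two estimators, rather than invoking the earlier equivalence of \cite{RefTK20}. Writing $\boldsymbol{M}_1 = \boldsymbol{X}^\top \boldsymbol{\Omega}^{-1} \boldsymbol{X} + \boldsymbol{K}_1$ and $\boldsymbol{M}_2 = \boldsymbol{X}^\top \boldsymbol{X} + \boldsymbol{K}_2$, both lie in $\mathcal{S}^+(k)$ and are therefore invertible, because $\rk(\boldsymbol{X}) = k$ and $\boldsymbol{K}_1, \boldsymbol{K}_2 \in \mathcal{S}^N(k)$. Since \eqref{R2E} is required to hold for every $\boldsymbol{y} \in \mathbb{R}^n$, it is equivalent to the matrix identity
\[
\boldsymbol{M}_1^{-1} \boldsymbol{X}^\top \boldsymbol{\Omega}^{-1} = \boldsymbol{M}_2^{-1} \boldsymbol{X}^\top,
\]
and the whole proof reduces to analysing this relation.

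For the necessity direction, I left-multiply the above by $\boldsymbol{M}_1$ to obtain $\boldsymbol{X}^\top \boldsymbol{\Omega}^{-1} = \boldsymbol{M}_1 \boldsymbol{M}_2^{-1} \boldsymbol{X}^\top$, and then take $\boldsymbol{G} = \boldsymbol{M}_2^{-1} \boldsymbol{M}_1$. Symmetry of $\boldsymbol{M}_1$ and $\boldsymbol{M}_2$ gives $\boldsymbol{G}^\top = \boldsymbol{M}_1 \boldsymbol{M}_2^{-1}$, so the identity becomes $\boldsymbol{X}^\top \boldsymbol{\Omega}^{-1} = \boldsymbol{G}^\top \boldsymbol{X}^\top$, whose transpose is $\boldsymbol{\Omega}^{-1} \boldsymbol{X} = \boldsymbol{X} \boldsymbol{G}$, that is, $\boldsymbol{X} = \boldsymbol{\Omega} \boldsymbol{X} \boldsymbol{G}$. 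The remaining equality $\boldsymbol{K}_1 = \boldsymbol{K}_2 \boldsymbol{G}$ then drops out of $\boldsymbol{M}_2 \boldsymbol{G} = \boldsymbol{M}_1$: expanding both sides and using $\boldsymbol{X}^\top \boldsymbol{X} \boldsymbol{G} = \boldsymbol{X}^\top \boldsymbol{\Omega}^{-1} \boldsymbol{X}$ (which is a consequence of $\boldsymbol{\Omega}^{-1}\boldsymbol{X} = \boldsymbol{X}\boldsymbol{G}$) cancels the $\boldsymbol{X}^\top\boldsymbol{X}$ terms and leaves $\boldsymbol{K}_2 \boldsymbol{G} = \boldsymbol{K}_1$.

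For the sufficiency direction, starting from $\boldsymbol{X} = \boldsymbol{\Omega} \boldsymbol{X} \boldsymbol{G}$ I derive $\boldsymbol{X}^\top \boldsymbol{\Omega}^{-1} = \boldsymbol{G}^\top \boldsymbol{X}^\top$ and $\boldsymbol{X}^\top \boldsymbol{\Omega}^{-1} \boldsymbol{X} = \boldsymbol{X}^\top \boldsymbol{X} \boldsymbol{G}$; combined with $\boldsymbol{K}_1 = \boldsymbol{K}_2 \boldsymbol{G}$ this gives $\boldsymbol{M}_1 = \boldsymbol{M}_2 \boldsymbol{G}$, from which $\boldsymbol{G}$ is automatically invertible. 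Symmetry of $\boldsymbol{M}_1$ forces $\boldsymbol{M}_2 \boldsymbol{G} = \boldsymbol{G}^\top \boldsymbol{M}_2$, equivalently $\boldsymbol{G}\, \boldsymbol{M}_2^{-1} = \boldsymbol{M}_2^{-1} \boldsymbol{G}^\top$, and then a short direct calculation
\[
\boldsymbol{M}_1^{-1} \boldsymbol{X}^\top \boldsymbol{\Omega}^{-1} = (\boldsymbol{M}_2 \boldsymbol{G})^{-1} \boldsymbol{G}^\top \boldsymbol{X}^\top = \boldsymbol{G}^{-1} \boldsymbol{M}_2^{-1} \boldsymbol{G}^\top \boldsymbol{X}^\top = \boldsymbol{M}_2^{-1} \boldsymbol{X}^\top
\]
closes the argument.

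The main subtlety is the correct choice of $\boldsymbol{G}$ in the necessity step and the repeated use of the symmetry of $\boldsymbol{M}_1$ and $\boldsymbol{M}_2$ to exchange $\boldsymbol{G}$ with $\boldsymbol{G}^\top$ through $\boldsymbol{M}_2^{-1}$; everything else is essentially bookkeeping. Note that although $\boldsymbol{G}$ in \eqref{Mcond1} is not a priori required to be invertible, any such $\boldsymbol{G}$ that satisfies both conditions must in fact be invertible, since $\boldsymbol{M}_2 \boldsymbol{G} = \boldsymbol{M}_1$ and both $\boldsymbol{M}_i$ are nonsingular, which is the key observation enabling the final inversion above.
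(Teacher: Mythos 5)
Your proof is correct, but the necessity direction takes a genuinely different route from the paper's. The paper does not work from the coefficient-matrix identity directly: it invokes Theorem~1 of \cite{RefTK20}, by which \eqref{R2E} is equivalent to $\boldsymbol{\Omega}$ having the structure \eqref{Rcond} together with $\boldsymbol{X}^\top \boldsymbol{X} \boldsymbol{\Gamma} \boldsymbol{K}_1 = \boldsymbol{K}_2$, and then exhibits $\boldsymbol{G} = (\boldsymbol{\Gamma} \boldsymbol{X}^\top \boldsymbol{X})^{-1}$, checking $\boldsymbol{\Omega}^{-1}\boldsymbol{X} = \boldsymbol{X}\boldsymbol{G}$ from the explicit block form of $\boldsymbol{\Omega}^{-1}$. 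You instead obtain necessity self-containedly: writing $\boldsymbol{M}_1 = \boldsymbol{X}^\top \boldsymbol{\Omega}^{-1}\boldsymbol{X} + \boldsymbol{K}_1$ and $\boldsymbol{M}_2 = \boldsymbol{X}^\top \boldsymbol{X} + \boldsymbol{K}_2$, the canonical choice $\boldsymbol{G} = \boldsymbol{M}_2^{-1}\boldsymbol{M}_1$ together with the symmetry of $\boldsymbol{M}_1$ and $\boldsymbol{M}_2$ turns the identity $\boldsymbol{M}_1^{-1}\boldsymbol{X}^\top \boldsymbol{\Omega}^{-1} = \boldsymbol{M}_2^{-1}\boldsymbol{X}^\top$ into $\boldsymbol{\Omega}^{-1}\boldsymbol{X} = \boldsymbol{X}\boldsymbol{G}$, and expanding $\boldsymbol{M}_2 \boldsymbol{G} = \boldsymbol{M}_1$ cancels $\boldsymbol{X}^\top \boldsymbol{X}\boldsymbol{G} = \boldsymbol{X}^\top \boldsymbol{\Omega}^{-1}\boldsymbol{X}$ and leaves $\boldsymbol{K}_1 = \boldsymbol{K}_2 \boldsymbol{G}$; I verified every step, including the commutation $\boldsymbol{G}\boldsymbol{M}_2^{-1} = \boldsymbol{M}_2^{-1}\boldsymbol{G}^\top$ used to close the sufficiency direction (which is otherwise essentially the paper's direct verification, rearranged). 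What your route buys: Theorem~\ref{thm1} becomes independent of the structural result of \cite{RefTK20}, and you identify $\boldsymbol{G}$ explicitly as $\boldsymbol{M}_2^{-1}\boldsymbol{M}_1$, so its nonsingularity (the paper's separate remark) falls out for free. What the paper's route buys: by passing through \eqref{Rcond} it exposes the covariance structure underlying \eqref{R2E}, which is precisely what is reused later in Theorem~\ref{thm3} to recast \eqref{R2E} as \eqref{Rcond} plus equality of biases; your purely algebraic argument, while more elementary, does not deliver that decomposition along the way.
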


\begin{proof}
Suppose first that the equality \eqref{R2E} holds, which is equivalent to the covariance matrix $\boldsymbol{\Omega}$ being of the form \eqref{Rcond} for some $\boldsymbol{\Gamma} \in \mathcal{S}^+(k), \boldsymbol{\Delta} \in \mathcal{S}^+(n-k)$ satisfying
\begin{equation}\label{XXGK}
\boldsymbol{X}^\top \boldsymbol{X} \boldsymbol{\Gamma} \boldsymbol{K}_1=\boldsymbol{K}_2,   
\end{equation}
by using Theorem 1 of \cite{RefTK20}. Then, \eqref{XXGK} can be rewritten as
\begin{align*}
&\boldsymbol{X}^{\top}\boldsymbol{X}\boldsymbol{\Gamma} \boldsymbol{K}_1=\boldsymbol{K}_2\\
\Leftrightarrow \quad &\boldsymbol{K}_1 \boldsymbol{\Gamma} \boldsymbol{X}^{\top}\boldsymbol{X}=\boldsymbol{K}_2\\
\Leftrightarrow \quad &\boldsymbol{K}_1=\boldsymbol{K}_2(\boldsymbol{\Gamma} \boldsymbol{X}^{\top}\boldsymbol{X} )^{-1}.
\end{align*}
Let us denote $\boldsymbol{G}=(\boldsymbol{\Gamma} \boldsymbol{X}^{\top}\boldsymbol{X} )^{-1}$. 
Since the inverse of the covariance matrix $\boldsymbol{\Omega}$ can be expressed as
\begin{align*}
\boldsymbol{\Omega}^{-1}=\boldsymbol{X}(\boldsymbol{X}^{\top}\boldsymbol{X})^{-1}\boldsymbol{\Gamma}^{-1}(\boldsymbol{X}^{\top}\boldsymbol{X})^{-1}\boldsymbol{X}^{\top}+\boldsymbol{Z}(\boldsymbol{Z}^{\top}\boldsymbol{Z})^{-1}\boldsymbol{\Delta}^{-1}(\boldsymbol{Z}^{\top}\boldsymbol{Z})^{-1}\boldsymbol{Z}^{\top},
\end{align*}
the following equality holds:
\begin{align*}
    \boldsymbol{\Omega}^{-1}\boldsymbol{X}=\boldsymbol{X}(\boldsymbol{X}^{\top}\boldsymbol{X})^{-1}\boldsymbol{\Gamma}^{-1}=\boldsymbol{X}(\boldsymbol{\Gamma} \boldsymbol{X}^{\top}\boldsymbol{X})^{-1}=\boldsymbol{X}\boldsymbol{G}.
\end{align*}
Therefore, we have \eqref{Mcond1}.

Conversely, suppose that \eqref{Mcond1} holds.
Then, we have
\begin{align*}
\boldsymbol{X}^{\top}\boldsymbol{\Omega}^{-1}&=\boldsymbol{G}^{\top}\boldsymbol{X}^{\top}\\
&= \boldsymbol{G}^{\top}(\boldsymbol{X}^{\top}\boldsymbol{X}+\boldsymbol{K}_2)(\boldsymbol{X}^{\top}\boldsymbol{X}+\boldsymbol{K}_2)^{-1}\boldsymbol{X}^{\top}\\
&= (\boldsymbol{G}^{\top}\boldsymbol{X}^{\top}\boldsymbol{X}+\boldsymbol{G}^{\top}\boldsymbol{K}_2)(\boldsymbol{X}^{\top}\boldsymbol{X}+\boldsymbol{K}_2)^{-1}\boldsymbol{X}^{\top}\\
&= (\boldsymbol{X}^{\top}\boldsymbol{\Omega}^{-1}\boldsymbol{X}+\boldsymbol{K}_1)(\boldsymbol{X}^{\top}\boldsymbol{X}+\boldsymbol{K}_2)^{-1}\boldsymbol{X}^{\top}.
\end{align*}
Thus, this equality is equivalent to
\begin{align*}
(\boldsymbol{X}^{\top}\boldsymbol{\Omega}^{-1}\boldsymbol{X}+\boldsymbol{K}_1)^{-1}\boldsymbol{X}^\top \boldsymbol{\Omega}^{-1}=(\boldsymbol{X}^{\top}\boldsymbol{X}+\boldsymbol{K}_2)^{-1}\boldsymbol{X}^{\top}.
\end{align*}
This completes the proof.
\end{proof}

\begin{rem}
Let $\boldsymbol{K}_1 = \boldsymbol{K}_2 = \boldsymbol{0}$. 
Then \eqref{Mcond1} becomes the condition \eqref{Kcond2}, which is equivalent to \eqref{Kcond1}.
\end{rem}

\begin{rem}
From Theorem~\ref{thm1}, if \eqref{R2E} holds, then we have
\[
\MC(\boldsymbol{X}) = \MC(\boldsymbol{\Omega} \boldsymbol{X}) , \ \MC(\boldsymbol{K}_1) = \MC(\boldsymbol{K}_2) .
\]
However, the converse is generally not true.
\end{rem}

\begin{rem}
If there exists a matrix $\boldsymbol{G}$ satisfying \eqref{Mcond1}, then $\boldsymbol{G}$ is nonsingular.
\end{rem}

If $\boldsymbol{K}_1$ and $\boldsymbol{K}_2$ are positive definite, we directly obtain the following corollary by using Theorem~\ref{thm1}.

\begin{cor}\label{cormtpd}
(i) For given $\boldsymbol{K}_1, \boldsymbol{K}_2 \in \mathcal{S}^+(k)$, the equality \eqref{R2E} holds if and only if $\boldsymbol{X} = \boldsymbol{\Omega} \boldsymbol{X} \boldsymbol{K}_2^{-1} \boldsymbol{K}_1$.\\
(ii) For given $\boldsymbol{K} \in \mathcal{S}^+(k)$, 
\[
 \hat{\boldsymbol{\beta}}(\boldsymbol{\Omega}, \boldsymbol{K}) = \hat{\boldsymbol{\beta}} (\boldsymbol{I}_n, \boldsymbol{K}) \quad \mbox{for any} \ \ \boldsymbol{y} \in \mathbb{R}^n 
\]
holds if and only if $\boldsymbol{X} = \boldsymbol{\Omega} \boldsymbol{X}$.
\end{cor}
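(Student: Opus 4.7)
The plan is to derive both parts directly from Theorem~\ref{thm1}, since the corollary is simply a specialization of that theorem to the case where the regularization matrices are positive definite and hence invertible.

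For part~(i), I would apply Theorem~\ref{thm1} with $\boldsymbol{K}_1, \boldsymbol{K}_2 \in \mathcal{S}^+(k)$. The equality \eqref{R2E} is equivalent to the existence of $\boldsymbol{G} \in \mathcal{M}(k,k)$ satisfying \eqref{Mcond1}. Since $\boldsymbol{K}_2$ is nonsingular, the second relation $\boldsymbol{K}_1 = \boldsymbol{K}_2 \boldsymbol{G}$ has the unique solution $\boldsymbol{G} = \boldsymbol{K}_2^{-1} \boldsymbol{K}_1$, and substituting this into the first relation $\boldsymbol{X} = \boldsymbol{\Omega} \boldsymbol{X} \boldsymbol{G}$ yields exactly $\boldsymbol{X} = \boldsymbol{\Omega} \boldsymbol{X} \boldsymbol{K}_2^{-1} \boldsymbol{K}_1$. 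Conversely, if this last identity holds, then defining $\boldsymbol{G} := \boldsymbol{K}_2^{-1} \boldsymbol{K}_1$ trivially satisfies $\boldsymbol{K}_2 \boldsymbol{G} = \boldsymbol{K}_1$ as well, so both conditions of \eqref{Mcond1} are met and Theorem~\ref{thm1} gives \eqref{R2E}. For part~(ii), I would simply take $\boldsymbol{K}_1 = \boldsymbol{K}_2 = \boldsymbol{K}$ in part~(i), which reduces $\boldsymbol{K}_2^{-1} \boldsymbol{K}_1$ to $\boldsymbol{I}_k$ and collapses the criterion to $\boldsymbol{X} = \boldsymbol{\Omega} \boldsymbol{X}$.

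No real obstacle is anticipated: Theorem~\ref{thm1} already does the heavy lifting, and the only additional ingredient is the invertibility of $\boldsymbol{K}_2$ (respectively $\boldsymbol{K}$). In particular, no appeal to Lemma~\ref{OOI} or to any further structural decomposition of $\boldsymbol{\Omega}$ is needed, and the proof can be written in a handful of lines.
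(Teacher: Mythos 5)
Your proposal is correct and matches the paper's approach exactly: the paper states that Corollary~\ref{cormtpd} is obtained directly from Theorem~\ref{thm1}, which is precisely your argument of forcing $\boldsymbol{G} = \boldsymbol{K}_2^{-1}\boldsymbol{K}_1$ by the invertibility of $\boldsymbol{K}_2$ in \eqref{Mcond1} and then specializing to $\boldsymbol{K}_1 = \boldsymbol{K}_2 = \boldsymbol{K}$ for part (ii).
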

\begin{rem}
Let $\boldsymbol{K}=\lambda \boldsymbol{I}_k \in \mathcal{S}^+(k)$. 
Corollary~\ref{cormtpd} (ii) implies that 
\[ \hat{\boldsymbol{\beta}}(\boldsymbol{\Omega},\lambda \boldsymbol{I}_k)=\hat{\boldsymbol{\beta}}(\boldsymbol{I}_n, \lambda \boldsymbol{I}_k)\quad \mbox{for any} \ \ \boldsymbol{y} \in \mathbb{R}^n \]
holds if and only if $\boldsymbol{X}=\boldsymbol{\Omega} \boldsymbol{X}$. 
See also Section~\ref{sec:5}.
\end{rem}

Furthermore, if $\boldsymbol{K}_1$ and $\boldsymbol{K}_2$ are idempotent matrices, we derive the following proposition.

\begin{prop}
Let $\boldsymbol{K}_1 \in \mathcal{S}^N(k)$ and $\boldsymbol{K}_2 \in \mathcal{S}^N(k)$ be idempotent matrices. If the equality \eqref{R2E} holds, then the equality \eqref{OBE} holds and $\boldsymbol{K}_1 = \boldsymbol{K}_2$.
\end{prop}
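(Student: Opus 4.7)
My plan is to invoke Theorem~\ref{thm1} and then exploit the idempotency of $\boldsymbol{K}_1, \boldsymbol{K}_2$ through a column space argument.

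First, applying Theorem~\ref{thm1} to the hypothesis \eqref{R2E} produces a matrix $\boldsymbol{G} \in \mathcal{M}(k,k)$ such that $\boldsymbol{X} = \boldsymbol{\Omega} \boldsymbol{X} \boldsymbol{G}$ and $\boldsymbol{K}_1 = \boldsymbol{K}_2 \boldsymbol{G}$; moreover, by the remark following Theorem~\ref{thm1}, such a $\boldsymbol{G}$ must be nonsingular. The first of these two equalities is precisely condition \eqref{Kcond2}, which as noted in the introduction (via \cite{RefK68}) is equivalent to \eqref{OBE}. This immediately settles the first half of the conclusion.

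It then remains to show $\boldsymbol{K}_1 = \boldsymbol{K}_2$. The key step is to pass from the relation $\boldsymbol{K}_1 = \boldsymbol{K}_2 \boldsymbol{G}$ to an identity of column spaces. Because $\boldsymbol{G}$ is nonsingular, the map $\boldsymbol{v} \mapsto \boldsymbol{G} \boldsymbol{v}$ is a bijection on $\mathbb{R}^k$, hence
\[
\MC(\boldsymbol{K}_1) = \MC(\boldsymbol{K}_2 \boldsymbol{G}) = \MC(\boldsymbol{K}_2).
\]
Now I invoke the standard fact that any symmetric idempotent matrix is the orthogonal projection onto its column space, and this projection is uniquely determined by that subspace. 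Since $\boldsymbol{K}_1, \boldsymbol{K}_2 \in \mathcal{S}^N(k)$ are symmetric and idempotent by assumption, and they share the same column space, they must coincide as orthogonal projections.

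I do not foresee a real obstacle here: the proof is essentially a direct consequence of Theorem~\ref{thm1} together with the uniqueness of orthogonal projections. The only small point to check is the nonsingularity of $\boldsymbol{G}$, which is already recorded in the excerpt, and the observation that idempotency combined with symmetry upgrades the mere equality of column spaces to equality of the matrices themselves.
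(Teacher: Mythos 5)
Your proof is correct, but it takes a genuinely different route from the paper's. The paper does not pass through Theorem~\ref{thm1} at all: it goes back to Theorem~1 of \cite{RefTK20}, writing \eqref{R2E} as \eqref{Rcond} together with the algebraic condition $\boldsymbol{X}^\top \boldsymbol{X} \boldsymbol{\Gamma} \boldsymbol{K}_1 = \boldsymbol{K}_2$, and then exploits idempotency purely by substitution: replacing $\boldsymbol{K}_1$ by $\boldsymbol{K}_1^2$ yields $\boldsymbol{K}_2 \boldsymbol{K}_1 = \boldsymbol{K}_2$, and inserting $\boldsymbol{K}_1 = \boldsymbol{K}_2(\boldsymbol{\Gamma}\boldsymbol{X}^\top\boldsymbol{X})^{-1}$ together with $\boldsymbol{K}_2^2 = \boldsymbol{K}_2$ forces $\boldsymbol{K}_2(\boldsymbol{\Gamma}\boldsymbol{X}^\top\boldsymbol{X})^{-1} = \boldsymbol{K}_2$, i.e.\ $\boldsymbol{K}_1 = \boldsymbol{K}_2$. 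You instead use the paper's own condition \eqref{Mcond1}, extract \eqref{Kcond2} for the first claim (as the paper in effect does via \eqref{Rcond} and Rao's result---both are valid derivations of \eqref{OBE}), and then replace all the matrix algebra by the structural observation that a nonsingular $\boldsymbol{G}$ preserves column spaces and that a symmetric idempotent matrix is the unique orthogonal projector onto its column space. Your argument is cleaner and makes transparent exactly where idempotency and symmetry enter (without symmetry, $\boldsymbol{K}_2\boldsymbol{G}$ idempotent with the same column space as $\boldsymbol{K}_2$ need not equal $\boldsymbol{K}_2$, so the appeal to $\mathcal{S}^N(k)$ is essential and you correctly flag it); it also proves the slightly more general fact that any relation $\boldsymbol{K}_1 = \boldsymbol{K}_2\boldsymbol{G}$ between two orthogonal projectors with $\boldsymbol{G}$ nonsingular forces equality. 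What the paper's computation buys in exchange is independence from the unproved remark on the nonsingularity of $\boldsymbol{G}$ (its particular $\boldsymbol{G} = (\boldsymbol{\Gamma}\boldsymbol{X}^\top\boldsymbol{X})^{-1}$ is invertible by construction) and the intermediate identity $\boldsymbol{K}_2\boldsymbol{K}_1 = \boldsymbol{K}_2$; if you wanted your version fully self-contained you would add the one-line justification that $\rk(\boldsymbol{X}) = k$ and $\boldsymbol{X} = \boldsymbol{\Omega}\boldsymbol{X}\boldsymbol{G}$ imply $\rk(\boldsymbol{G}) = k$.
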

\begin{proof}
Suppose that the equality \eqref{R2E} holds, which is equivalent to the covariance matrix $\boldsymbol{\Omega}$ being of the form \eqref{Rcond} for some $\boldsymbol{\Gamma} \in \mathcal{S}^+(k), \boldsymbol{\Delta} \in \mathcal{S}^+(n-k)$ satisfying \eqref{XXGK} by using Theorem 1 of \cite{RefTK20}. 
First,  \eqref{Rcond} is equivalent to \eqref{OBE} from \cite{RefR67}. 
Since \eqref{XXGK} is equivalent to
\begin{align*}
\boldsymbol{K}_1 = \boldsymbol{K}_2(\boldsymbol{\Gamma} \boldsymbol{X}^\top \boldsymbol{X})^{-1},
\end{align*}
it holds that
\begin{align*}
& \boldsymbol{X}^\top \boldsymbol{X} \boldsymbol{\Gamma} \boldsymbol{K}_1 = \boldsymbol{K}_2  \\
\Leftrightarrow \quad & \boldsymbol{X}^\top \boldsymbol{X} \boldsymbol{\Gamma} \boldsymbol{K}_1^2 = \boldsymbol{K}_2  \\
\Rightarrow \quad & \boldsymbol{K}_2 \boldsymbol{K}_1 = \boldsymbol{K}_2 \\
\Rightarrow \quad & \boldsymbol{K}_2^2 (\boldsymbol{\Gamma} \boldsymbol{X}^\top \boldsymbol{X})^{-1} = \boldsymbol{K}_2\\
\Leftrightarrow \quad &  \boldsymbol{K}_2 (\boldsymbol{\Gamma} \boldsymbol{X}^\top \boldsymbol{X})^{-1} = \boldsymbol{K}_2\\
\Rightarrow \quad &  \boldsymbol{K}_1 = \boldsymbol{K}_2.
\end{align*}
 This completes the proof.
\end{proof}

As mentioned in Section~\ref{sec:1}, the bias of the general ridge estimator $\hat{\boldsymbol{\beta}}(\boldsymbol{\Phi},\boldsymbol{K})$
\[ 
\E\left[ \hat{\boldsymbol{\beta}}(\boldsymbol{\Phi},\boldsymbol{K}) \right] - \boldsymbol{\beta} 
= (\boldsymbol{X}^\top \boldsymbol{\Phi}^{-1} \boldsymbol{X} + \boldsymbol{K})^{-1} \boldsymbol{X}^\top \boldsymbol{\Phi}^{-1} \boldsymbol{X} \boldsymbol{\beta} - \boldsymbol{\beta}
= -(\boldsymbol{X}^\top \boldsymbol{\Phi}^{-1} \boldsymbol{X} + \boldsymbol{K})^{-1} \boldsymbol{K} \boldsymbol{\beta} 
\]
is generally nonzero. 
An implication of the condition for \eqref{R2E} to hold becomes more apparent by considering conditions under which the biases of the two general ridge estimators $\hat{\boldsymbol{\beta}}(\boldsymbol{\Omega},\boldsymbol{K}_1)$ and $\hat{\boldsymbol{\beta}}(\boldsymbol{I}_n,\boldsymbol{K}_2)$ coincide, that is to say, 
\begin{equation}\label{biascond}
\E\left[ \hat{\boldsymbol{\beta}}(\boldsymbol{\Omega},\boldsymbol{K}_1) \right]
= \E\left[\hat{\boldsymbol{\beta}}(\boldsymbol{I}_n,\boldsymbol{K}_2)\right]
\quad \mbox{for any} \ \ \boldsymbol{\beta} \in \mathbb{R}^k.
\end{equation}
If \eqref{R2E} holds, then \eqref{biascond} holds. 
But the converse is not true in general. 
First, we derive the necessary and sufficient condition under which \eqref{biascond} holds.

\begin{thm}\label{thmbias}
Let $\boldsymbol{\Omega}$ be expressed as \eqref{Orep}.
For given $\boldsymbol{K}_1, \boldsymbol{K}_2 \in \mathcal{S}^N(k)$, the equality \eqref{biascond} holds if and only if
\begin{align}\label{Th33}
\boldsymbol{X}^\top \boldsymbol{X} \boldsymbol{A} ^{-1}\boldsymbol{K}_1=\boldsymbol{K}_2,
\end{align}
where $\boldsymbol{A}=(\boldsymbol{\Gamma}- \boldsymbol{\Xi} \boldsymbol{\Delta}^{-1}\boldsymbol{\Xi}^\top)^{-1}$.
\end{thm}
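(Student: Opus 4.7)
The plan is to strip the bias equation of its dependence on $\boldsymbol{\beta}$, then use the representation of $\boldsymbol{\Omega}^{-1}$ from Lemma~\ref{OOI} to reduce both sides to a comparable form, and finally perform a short algebraic manipulation to land on \eqref{Th33}.

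First, using the bias formula
\[
\E[\hat{\boldsymbol{\beta}}(\boldsymbol{\Phi},\boldsymbol{K})] - \boldsymbol{\beta} = -(\boldsymbol{X}^\top\boldsymbol{\Phi}^{-1}\boldsymbol{X}+\boldsymbol{K})^{-1}\boldsymbol{K}\boldsymbol{\beta}
\]
displayed in the paragraph preceding the theorem, I observe that \eqref{biascond} holds for every $\boldsymbol{\beta}\in\mathbb{R}^k$ if and only if the coefficient matrices agree, i.e.
\[
(\boldsymbol{X}^\top\boldsymbol{\Omega}^{-1}\boldsymbol{X}+\boldsymbol{K}_1)^{-1}\boldsymbol{K}_1 = (\boldsymbol{X}^\top\boldsymbol{X}+\boldsymbol{K}_2)^{-1}\boldsymbol{K}_2.
\]
Next, a direct computation using the block representation of $\boldsymbol{\Omega}^{-1}$ in Lemma~\ref{OOI}, together with $\boldsymbol{X}^\top\boldsymbol{X}(\boldsymbol{X}^\top\boldsymbol{X})^{-1}=\boldsymbol{I}_k$ and $\boldsymbol{X}^\top\boldsymbol{Z}=\boldsymbol{0}$, yields the key identity $\boldsymbol{X}^\top\boldsymbol{\Omega}^{-1}\boldsymbol{X}=\boldsymbol{A}$. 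Substituting this, the condition becomes
\[
(\boldsymbol{A}+\boldsymbol{K}_1)^{-1}\boldsymbol{K}_1 = (\boldsymbol{X}^\top\boldsymbol{X}+\boldsymbol{K}_2)^{-1}\boldsymbol{K}_2.
\]

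For the final step, I would apply the elementary identity $(\boldsymbol{M}+\boldsymbol{K})^{-1}\boldsymbol{K}=\boldsymbol{I}_k-(\boldsymbol{M}+\boldsymbol{K})^{-1}\boldsymbol{M}$ on both sides, so that the displayed equation is equivalent to
\[
(\boldsymbol{A}+\boldsymbol{K}_1)^{-1}\boldsymbol{A} = (\boldsymbol{X}^\top\boldsymbol{X}+\boldsymbol{K}_2)^{-1}\boldsymbol{X}^\top\boldsymbol{X}.
\]
Since $\boldsymbol{A}\in\mathcal{S}^+(k)$ and $\boldsymbol{X}^\top\boldsymbol{X}\in\mathcal{S}^+(k)$ (the latter because $\rk(\boldsymbol{X})=k$), both sides are products of invertible matrices; inverting and simplifying gives $\boldsymbol{A}^{-1}\boldsymbol{K}_1=(\boldsymbol{X}^\top\boldsymbol{X})^{-1}\boldsymbol{K}_2$, and premultiplying by $\boldsymbol{X}^\top\boldsymbol{X}$ yields \eqref{Th33}. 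Every step is reversible, giving the ``if'' direction too.

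The main obstacle I expect is minor: one must confirm that $\boldsymbol{A}$ is positive definite and that $\boldsymbol{X}^\top\boldsymbol{\Omega}^{-1}\boldsymbol{X}=\boldsymbol{A}$, which justifies inverting both sides at the final step; both facts follow immediately from Lemma~\ref{OOI} and the $\mathcal{S}^+$ hypotheses. After that, the argument is a short chain of equivalences with no hidden case analysis.
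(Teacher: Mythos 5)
Your proof is correct and essentially the same as the paper's: both reduce \eqref{biascond} to an equality of coefficient matrices via $\boldsymbol{X}^\top\boldsymbol{\Omega}^{-1}\boldsymbol{X}=\boldsymbol{A}$ from Lemma~\ref{OOI} and then invert, the paper comparing the means $(\boldsymbol{A}+\boldsymbol{K}_1)^{-1}\boldsymbol{A}=(\boldsymbol{I}_k+\boldsymbol{A}^{-1}\boldsymbol{K}_1)^{-1}$ directly where you start from the biases and pass through the identity $(\boldsymbol{M}+\boldsymbol{K})^{-1}\boldsymbol{K}=\boldsymbol{I}_k-(\boldsymbol{M}+\boldsymbol{K})^{-1}\boldsymbol{M}$ to reach the same display. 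This is a cosmetic rearrangement, not a different method, and all your justifications (positive definiteness of $\boldsymbol{A}$ and $\boldsymbol{X}^\top\boldsymbol{X}$, reversibility of each step) are sound.
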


\begin{proof}
First, the expectation values of $\hat{\boldsymbol{\beta}}(\boldsymbol{\Omega},\boldsymbol{K}_1)$ and $\hat{\boldsymbol{\beta}}(\boldsymbol{I}_n,\boldsymbol{K}_2)$ are calculated to be
\begin{align*}
& \E\left[\hat{\boldsymbol{\beta}}(\boldsymbol{\Omega},\boldsymbol{K}_1)\right] = (\boldsymbol{A} + \boldsymbol{K}_1)^{-1} \boldsymbol{A} \boldsymbol{\beta} = (\boldsymbol{I}_k + \boldsymbol{A}^{-1} \boldsymbol{K}_1)^{-1} \boldsymbol{\beta},\\
& \E\left[\hat{\boldsymbol{\beta}}(\boldsymbol{I}_n,\boldsymbol{K}_2)\right] = (\boldsymbol{X}^\top \boldsymbol{X} + \boldsymbol{K}_2)^{-1} \boldsymbol{X}^\top \boldsymbol{X} \boldsymbol{\beta} = \left\{\boldsymbol{I}_k + ( \boldsymbol{X}^\top \boldsymbol{X} )^{-1}\boldsymbol{K}_2\right\}^{-1}  \boldsymbol{\beta}.
\end{align*}
So, \eqref{biascond} is equivalent to
\begin{align*}
&(\boldsymbol{I}_k + \boldsymbol{A}^{-1} \boldsymbol{K}_1)^{-1} =  \left\{\boldsymbol{I}_k + ( \boldsymbol{X}^\top \boldsymbol{X} )^{-1}\boldsymbol{K}_2\right\}^{-1}\\
\Leftrightarrow \quad & \boldsymbol{I}_k + \boldsymbol{A}^{-1} \boldsymbol{K}_1 = \boldsymbol{I}_k + ( \boldsymbol{X}^\top \boldsymbol{X} )^{-1}\boldsymbol{K}_2\\
\Leftrightarrow \quad &  \boldsymbol{X}^\top \boldsymbol{X} \boldsymbol{A}^{-1} \boldsymbol{K}_1 = \boldsymbol{K}_2.
\end{align*}
This completes the proof.
\end{proof}
By using Theorem~\ref{thmbias}, we see that the necessary and sufficient condition for \eqref{R2E} to hold is given by \eqref{Rcond} and the equivalence of biases, which gives a clearer interpretation compared with the result of \cite{RefTK20}. 
The following theorem is the second main result of this paper.

\begin{thm}\label{thm3}
For given $\boldsymbol{K}_1, \boldsymbol{K}_2 \in \mathcal{S}^N(k)$, the equality \eqref{R2E} holds if and only if $\boldsymbol{\Omega}$ is of the form \eqref{Rcond} and the equality \eqref{biascond} holds.
\end{thm}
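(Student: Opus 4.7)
The plan is to combine two results already in hand: Theorem~1 of \cite{RefTK20}, which characterizes \eqref{R2E} as the conjunction of \eqref{Rcond} and \eqref{XXGK}, and Theorem~\ref{thmbias}, which characterizes \eqref{biascond} as $\boldsymbol{X}^\top \boldsymbol{X} \boldsymbol{A}^{-1} \boldsymbol{K}_1 = \boldsymbol{K}_2$. The bridge between the two is the observation that under \eqref{Rcond}, the off-diagonal block $\boldsymbol{\Xi}$ in the representation \eqref{Orep} must vanish, collapsing $\boldsymbol{A}^{-1}$ to $\boldsymbol{\Gamma}$.

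For the forward direction, I would argue that if \eqref{R2E} holds, then Theorem~1 of \cite{RefTK20} yields \eqref{Rcond} immediately, and taking expectations of both sides of \eqref{R2E} (viewed as an identity in $\boldsymbol{y}$, hence in $\boldsymbol{\beta}$) gives \eqref{biascond} for free.

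For the reverse direction, I would start by recalling the remark that \eqref{Rcond} is equivalent to $\boldsymbol{\Xi} = \boldsymbol{0}$ in the (unique) representation \eqref{Orep} of $\boldsymbol{\Omega}$; uniqueness comes from the nonsingularity of $(\boldsymbol{X}\ \boldsymbol{Z})$ and the explicit formulas in the first remark. Consequently,
\[
\boldsymbol{A}^{-1} = \boldsymbol{\Gamma} - \boldsymbol{\Xi}\boldsymbol{\Delta}^{-1}\boldsymbol{\Xi}^\top = \boldsymbol{\Gamma}.
\]
Applying Theorem~\ref{thmbias}, the assumed \eqref{biascond} reads $\boldsymbol{X}^\top \boldsymbol{X}\boldsymbol{A}^{-1}\boldsymbol{K}_1 = \boldsymbol{K}_2$, which after substituting $\boldsymbol{A}^{-1} = \boldsymbol{\Gamma}$ becomes exactly the condition \eqref{XXGK}. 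Therefore both \eqref{Rcond} and \eqref{XXGK} hold, and another invocation of Theorem~1 of \cite{RefTK20} delivers \eqref{R2E}.

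No genuinely difficult step arises: the proof is essentially a short assembly of existing results, with the only point worth highlighting being the uniqueness of the decomposition \eqref{Orep} (so that \eqref{Rcond} really does force $\boldsymbol{\Xi}=\boldsymbol{0}$ rather than merely allowing some choice with $\boldsymbol{\Xi}=\boldsymbol{0}$). The conceptual value of the statement is that it decouples the equality \eqref{R2E} into a ``covariance-structure'' condition \eqref{Rcond} and a ``bias-matching'' condition \eqref{biascond}, which is cleaner than the combined form \eqref{Rcond}+\eqref{XXGK} appearing in \cite{RefTK20}.
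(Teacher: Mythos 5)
Your proposal is correct and is essentially the argument the paper intends: Theorem~\ref{thm3} is stated there as an immediate consequence of Theorem~1 of \cite{RefTK20} combined with Theorem~\ref{thmbias}, using exactly your observation that under \eqref{Rcond} the (unique) representation \eqref{Orep} has $\boldsymbol{\Xi}=\boldsymbol{0}$, so $\boldsymbol{A}^{-1}=\boldsymbol{\Gamma}$ and \eqref{XXGK} coincides with \eqref{Th33}. Your shortcut of obtaining \eqref{biascond} in the forward direction by taking expectations is a harmless simplification of the same route.
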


Moreover, in the following proposition, we consider necessary and sufficient conditions under which \eqref{biascond} and the equality
\begin{align}\label{Covcond}
\V\left(\hat{\boldsymbol{\beta}}(\boldsymbol{\Omega},\boldsymbol{K}_1)\right)=\V\left(\hat{\boldsymbol{\beta}}(\boldsymbol{I}_n,\boldsymbol{K}_2)\right)
\end{align}
both hold. 

\begin{prop} \label{pr36}
Let $\boldsymbol{\Omega}$ be expressed in \eqref{Orep}. 
For given $\boldsymbol{K}_1,\boldsymbol{K}_2 \in \mathcal{S}^N(k)$,
\eqref{biascond} and \eqref{Covcond} hold
if and only if the following two conditions both hold:
\begin{align}
&\boldsymbol{X}^\top \boldsymbol{X} \boldsymbol{A}^{-1}\boldsymbol{K}_1=\boldsymbol{K}_2, \notag\\
& \boldsymbol{A} + \boldsymbol{K}_1 \boldsymbol{\Xi} \boldsymbol{\Delta}^{-1} \boldsymbol{\Xi} ^\top \boldsymbol{K}_1=(\boldsymbol{I}_k-\boldsymbol{K}_1 \boldsymbol{\Xi} \boldsymbol{\Delta}^{-1} \boldsymbol{\Xi} ^\top)\boldsymbol{\Gamma}^{-1}(\boldsymbol{I}_k- \boldsymbol{\Xi} \boldsymbol{\Delta}^{-1} \boldsymbol{\Xi} ^\top \boldsymbol{K}_1), \label{Pr35}
\end{align}
where $\boldsymbol{A}=(\boldsymbol{\Gamma}- \boldsymbol{\Xi} \boldsymbol{\Delta}^{-1}\boldsymbol{\Xi}^\top)^{-1}$.
\end{prop}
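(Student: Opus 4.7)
The plan is to split the biconditional in two. By Theorem~\ref{thmbias}, the first listed condition $\boldsymbol{X}^\top\boldsymbol{X}\boldsymbol{A}^{-1}\boldsymbol{K}_1=\boldsymbol{K}_2$ is just a restatement of \eqref{biascond}, so the work reduces to showing that, granted \eqref{biascond}, the variance equality \eqref{Covcond} is equivalent to \eqref{Pr35}. Throughout, write $\boldsymbol{E}:=\boldsymbol{\Xi}\boldsymbol{\Delta}^{-1}\boldsymbol{\Xi}^\top$ for brevity; by \eqref{ABCD}, $\boldsymbol{A}^{-1}=\boldsymbol{\Gamma}-\boldsymbol{E}$, and this is the identity that will bridge the two formulations.

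The first concrete step is to compute both covariances in closed form. Using $\boldsymbol{X}^\top\boldsymbol{\Omega}^{-1}\boldsymbol{X}=\boldsymbol{A}$ from Lemma~\ref{OOI} (the cross terms die because $\boldsymbol{Z}^\top\boldsymbol{X}=\boldsymbol{0}$) and $\boldsymbol{X}^\top\boldsymbol{\Omega}\boldsymbol{X}=\boldsymbol{X}^\top\boldsymbol{X}\boldsymbol{\Gamma}\boldsymbol{X}^\top\boldsymbol{X}$ from \eqref{Orep}, one obtains
\begin{align*}
\V\bigl(\hat{\boldsymbol{\beta}}(\boldsymbol{\Omega},\boldsymbol{K}_1)\bigr)&=\sigma^2(\boldsymbol{A}+\boldsymbol{K}_1)^{-1}\boldsymbol{A}(\boldsymbol{A}+\boldsymbol{K}_1)^{-1},\\
\V\bigl(\hat{\boldsymbol{\beta}}(\boldsymbol{I}_n,\boldsymbol{K}_2)\bigr)&=\sigma^2(\boldsymbol{X}^\top\boldsymbol{X}+\boldsymbol{K}_2)^{-1}\boldsymbol{X}^\top\boldsymbol{X}\boldsymbol{\Gamma}\boldsymbol{X}^\top\boldsymbol{X}(\boldsymbol{X}^\top\boldsymbol{X}+\boldsymbol{K}_2)^{-1}.
\end{align*}
Both matrices are positive definite, so \eqref{Covcond} is equivalent to the equality of their inverses. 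Invoking \eqref{biascond} in the form $(\boldsymbol{X}^\top\boldsymbol{X})^{-1}\boldsymbol{K}_2=\boldsymbol{A}^{-1}\boldsymbol{K}_1$ (and its transpose $\boldsymbol{K}_2(\boldsymbol{X}^\top\boldsymbol{X})^{-1}=\boldsymbol{K}_1\boldsymbol{A}^{-1}$, valid because $\boldsymbol{K}_2$ is symmetric) reduces this inverted identity to
\begin{equation*}
(\boldsymbol{A}+\boldsymbol{K}_1)\boldsymbol{A}^{-1}(\boldsymbol{A}+\boldsymbol{K}_1)=(\boldsymbol{I}_k+\boldsymbol{K}_1\boldsymbol{A}^{-1})\boldsymbol{\Gamma}^{-1}(\boldsymbol{I}_k+\boldsymbol{A}^{-1}\boldsymbol{K}_1).
\end{equation*}

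The remaining task, and the main obstacle, is to recast this identity in the form \eqref{Pr35}; the two expressions look very different at a glance because \eqref{Pr35} is phrased through $\boldsymbol{E}$ and $\boldsymbol{\Gamma}^{-1}$, whereas the displayed equation involves $\boldsymbol{A}$ and $\boldsymbol{A}^{-1}$. The key manipulation is the splitting $\boldsymbol{I}_k+\boldsymbol{A}^{-1}\boldsymbol{K}_1=(\boldsymbol{I}_k-\boldsymbol{E}\boldsymbol{K}_1)+\boldsymbol{\Gamma}\boldsymbol{K}_1$ (and its transpose), which is immediate from $\boldsymbol{A}^{-1}=\boldsymbol{\Gamma}-\boldsymbol{E}$. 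Expanding the right-hand side with both splittings and collapsing the mixed terms via $\boldsymbol{\Gamma}^{-1}\boldsymbol{\Gamma}=\boldsymbol{I}_k$ gives $(\boldsymbol{I}_k-\boldsymbol{K}_1\boldsymbol{E})\boldsymbol{\Gamma}^{-1}(\boldsymbol{I}_k-\boldsymbol{E}\boldsymbol{K}_1)+2\boldsymbol{K}_1+\boldsymbol{K}_1\boldsymbol{\Gamma}\boldsymbol{K}_1-2\boldsymbol{K}_1\boldsymbol{E}\boldsymbol{K}_1$, whereas the left-hand side expands to $\boldsymbol{A}+2\boldsymbol{K}_1+\boldsymbol{K}_1(\boldsymbol{\Gamma}-\boldsymbol{E})\boldsymbol{K}_1$. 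Cancelling the common terms $2\boldsymbol{K}_1+\boldsymbol{K}_1\boldsymbol{\Gamma}\boldsymbol{K}_1$ and rearranging the residual $\boldsymbol{K}_1\boldsymbol{E}\boldsymbol{K}_1$ contributions makes the identity collapse to exactly \eqref{Pr35}. The entire derivation hinges on the single substitution $\boldsymbol{A}^{-1}=\boldsymbol{\Gamma}-\boldsymbol{E}$; the bookkeeping is routine, and no further structural insight is required.
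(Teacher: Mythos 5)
Your proposal is correct and takes essentially the same route as the paper: both reduce \eqref{biascond} to \eqref{Th33} via Theorem~\ref{thmbias}, compute the two covariance matrices in the same closed forms, use $\boldsymbol{K}_2(\boldsymbol{X}^\top\boldsymbol{X})^{-1}=\boldsymbol{K}_1\boldsymbol{A}^{-1}$ to pass to the inverted identity $(\boldsymbol{A}+\boldsymbol{K}_1)\boldsymbol{A}^{-1}(\boldsymbol{A}+\boldsymbol{K}_1)=(\boldsymbol{I}_k+\boldsymbol{K}_1\boldsymbol{A}^{-1})\boldsymbol{\Gamma}^{-1}(\boldsymbol{I}_k+\boldsymbol{A}^{-1}\boldsymbol{K}_1)$, and then substitute $\boldsymbol{A}^{-1}=\boldsymbol{\Gamma}-\boldsymbol{\Xi}\boldsymbol{\Delta}^{-1}\boldsymbol{\Xi}^\top$ to collapse to \eqref{Pr35}. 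Your splitting $\boldsymbol{I}_k+\boldsymbol{A}^{-1}\boldsymbol{K}_1=(\boldsymbol{I}_k-\boldsymbol{\Xi}\boldsymbol{\Delta}^{-1}\boldsymbol{\Xi}^\top\boldsymbol{K}_1)+\boldsymbol{\Gamma}\boldsymbol{K}_1$ merely organizes the same term-by-term expansion the paper carries out more verbosely.
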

\begin{proof} 
From Theorem~\ref{thmbias}, \eqref{biascond} is equivalent to \eqref{Th33}. 
Next, the covariance matrices of $\hat{\boldsymbol{\beta}}(\boldsymbol{\Omega},\boldsymbol{K}_1)$ and $\hat{\boldsymbol{\beta}}(\boldsymbol{I}_n,\boldsymbol{K}_2)$ are calculated to be
\begin{align*}
\V\left(\hat{\boldsymbol{\beta}}(\boldsymbol{\Omega},\boldsymbol{K}_1)\right)
&=\V\left((\boldsymbol{A}+\boldsymbol{K}_1)^{-1}\boldsymbol{X}^\top \boldsymbol{\Omega}^{-1}\boldsymbol{y}\right)\\
&= \sigma^2 (\boldsymbol{A}+\boldsymbol{K}_1)^{-1}\boldsymbol{X}^\top \boldsymbol{\Omega}^{-1}\boldsymbol{\Omega} \boldsymbol{\Omega}^{-1} \boldsymbol{X}(\boldsymbol{A}+\boldsymbol{K}_1)^{-1}\\
&=\sigma^2(\boldsymbol{A}+\boldsymbol{K}_1)^{-1}\boldsymbol{A}(\boldsymbol{A}+\boldsymbol{K}_1)^{-1},\\
\V\left(\hat{\boldsymbol{\beta}}(\boldsymbol{I}_n,\boldsymbol{K}_2)\right)
&=\V\left((\boldsymbol{X}^\top \boldsymbol{X}+\boldsymbol{K}_2)^{-1}\boldsymbol{X}^\top \boldsymbol{y}\right)\\
&= \sigma^2 (\boldsymbol{X}^\top \boldsymbol{X}+\boldsymbol{K}_2)^{-1}\boldsymbol{X}^\top \boldsymbol{\Omega} \boldsymbol{X} (\boldsymbol{X}^\top \boldsymbol{X}+\boldsymbol{K}_2)^{-1}\\
&= \sigma^2 (\boldsymbol{X}^\top \boldsymbol{X}+\boldsymbol{K}_2)^{-1}\boldsymbol{X}^\top \boldsymbol{X} \boldsymbol{\Gamma} \boldsymbol{X}^\top \boldsymbol{X} (\boldsymbol{X}^\top \boldsymbol{X}+\boldsymbol{K}_2)^{-1},
\end{align*}
respectively. 
Since
\begin{align*}
\boldsymbol{K}_2= \boldsymbol{X}^\top \boldsymbol{X} \boldsymbol{A}^{-1}\boldsymbol{K}_1 = \boldsymbol{K}_1 \boldsymbol{A}^{-1}\boldsymbol{X}^\top \boldsymbol{X} ,
\end{align*}
the covariance matrix of $\hat{\boldsymbol{\beta}}(\boldsymbol{I}_n,\boldsymbol{K}_2)$ can be further rewritten as
\begin{align*}
\V\left(\hat{\boldsymbol{\beta}}(\boldsymbol{I}_n,\boldsymbol{K}_2)\right)&= \sigma^2 (\boldsymbol{I}_k+\boldsymbol{A}^{-1}\boldsymbol{K}_1)^{-1}\boldsymbol{\Gamma} (\boldsymbol{I}_k+\boldsymbol{K}_1 \boldsymbol{A}^{-1})^{-1}.
\end{align*}
Thus, \eqref{Covcond} is equivalent to
\begin{align*}
 & (\boldsymbol{A}+\boldsymbol{K}_1)^{-1}\boldsymbol{A}(\boldsymbol{A}+\boldsymbol{K}_1)^{-1} = (\boldsymbol{I}_k+\boldsymbol{A}^{-1}\boldsymbol{K}_1)^{-1}\boldsymbol{\Gamma} (\boldsymbol{I}_k+\boldsymbol{K}_1 \boldsymbol{A}^{-1})^{-1}\\
\Leftrightarrow \quad & (\boldsymbol{A}+\boldsymbol{K}_1) \boldsymbol{A}^{-1} (\boldsymbol{A}+\boldsymbol{K}_1) = (\boldsymbol{I}_k+\boldsymbol{K}_1 \boldsymbol{A}^{-1}) \boldsymbol{\Gamma} ^{-1} (\boldsymbol{I}_k+\boldsymbol{A}^{-1}\boldsymbol{K}_1)\\
\Leftrightarrow \quad & \boldsymbol{A}+ 2\boldsymbol{K}_1 + \boldsymbol{K}_1 \boldsymbol{A}^{-1}\boldsymbol{K}_1= \boldsymbol{\Gamma}^{-1}+\boldsymbol{K}_1 \boldsymbol{A}^{-1}\boldsymbol{\Gamma}^{-1}  + \boldsymbol{\Gamma}^{-1} \boldsymbol{A}^{-1} \boldsymbol{K}_1 + \boldsymbol{K}_1 \boldsymbol{A}^{-1} \boldsymbol{\Gamma}^{-1} \boldsymbol{A}^{-1} \boldsymbol{K}_1 \\
\Leftrightarrow \quad & \boldsymbol{A} + 2\boldsymbol{K}_1 + \boldsymbol{K}_1(\boldsymbol{\Gamma} - \boldsymbol{\Xi} \boldsymbol{\Delta}^{-1} \boldsymbol{\Xi}^{\top})\boldsymbol{K}_1\\
&= \boldsymbol{\Gamma}^{-1}+\boldsymbol{K}_1 (\boldsymbol{\Gamma} - \boldsymbol{\Xi} \boldsymbol{\Delta}^{-1} \boldsymbol{\Xi}^{\top}) \boldsymbol{\Gamma}^{-1}+ \boldsymbol{\Gamma}^{-1}(\boldsymbol{\Gamma} - \boldsymbol{\Xi} \boldsymbol{\Delta}^{-1} \boldsymbol{\Xi}^{\top})\boldsymbol{K}_1\\
&+\boldsymbol{K}_1(\boldsymbol{\Gamma} - \boldsymbol{\Xi} \boldsymbol{\Delta}^{-1} \boldsymbol{\Xi}^{\top}) \boldsymbol{\Gamma}^{-1}(\boldsymbol{\Gamma} - \boldsymbol{\Xi} \boldsymbol{\Delta}^{-1} \boldsymbol{\Xi}^{\top})\boldsymbol{K}_1\\
\Leftrightarrow \quad & \boldsymbol{A}+ 2\boldsymbol{K}_1 +\boldsymbol{K}_1 \boldsymbol{\Gamma} \boldsymbol{K}_1- \boldsymbol{K}_1 \boldsymbol{\Xi} \boldsymbol{\Delta}^{-1} \boldsymbol{\Xi}^{\top} \boldsymbol{K}_1 \\
& = \boldsymbol{\Gamma}^{-1}+ \boldsymbol{K}_1 - \boldsymbol{K}_1  \boldsymbol{\Xi} \boldsymbol{\Delta}^{-1} \boldsymbol{\Xi}^{\top} \boldsymbol{\Gamma}^{-1}+\boldsymbol{K}_1- \boldsymbol{\Gamma}^{-1}  \boldsymbol{\Xi} \boldsymbol{\Delta}^{-1} \boldsymbol{\Xi}^{\top}  \boldsymbol{K}_1 \\
&+  \boldsymbol{K}_1 \boldsymbol{\Gamma} \boldsymbol{K}_1 -2 \boldsymbol{K}_1  \boldsymbol{\Xi} \boldsymbol{\Delta}^{-1} \boldsymbol{\Xi}^{\top} \boldsymbol{K}_1 
 + \boldsymbol{K}_1  \boldsymbol{\Xi} \boldsymbol{\Delta}^{-1} \boldsymbol{\Xi}^{\top}  \boldsymbol{\Gamma}^{-1}  \boldsymbol{\Xi} \boldsymbol{\Delta}^{-1} \boldsymbol{\Xi}^{\top}  \boldsymbol{K}_1\\
\Leftrightarrow \quad & \boldsymbol{A}+\boldsymbol{K}_1  \boldsymbol{\Xi} \boldsymbol{\Delta}^{-1} \boldsymbol{\Xi}^{\top} \boldsymbol{K}_1 \\
&=\boldsymbol{\Gamma}^{-1}-\boldsymbol{K}_1  \boldsymbol{\Xi} \boldsymbol{\Delta}^{-1} \boldsymbol{\Xi}^{\top} \boldsymbol{\Gamma}^{-1}- \boldsymbol{\Gamma}^{-1}  \boldsymbol{\Xi} \boldsymbol{\Delta}^{-1} \boldsymbol{\Xi}^{\top}  \boldsymbol{K}_1 + \boldsymbol{K}_1  \boldsymbol{\Xi} \boldsymbol{\Delta}^{-1} \boldsymbol{\Xi}^{\top}  \boldsymbol{\Gamma}^{-1}  \boldsymbol{\Xi} \boldsymbol{\Delta}^{-1} \boldsymbol{\Xi}^{\top}  \boldsymbol{K}_1\\
\Leftrightarrow \quad & \boldsymbol{A}+\boldsymbol{K}_1  \boldsymbol{\Xi} \boldsymbol{\Delta}^{-1} \boldsymbol{\Xi}^{\top} \boldsymbol{K}_1= (\boldsymbol{I}_k-\boldsymbol{K}_1 \boldsymbol{\Xi} \boldsymbol{\Delta}^{-1} \boldsymbol{\Xi} ^\top)\boldsymbol{\Gamma}^{-1}(\boldsymbol{I}_k- \boldsymbol{\Xi} \boldsymbol{\Delta}^{-1} \boldsymbol{\Xi} ^\top \boldsymbol{K}_1).
\end{align*}
This completes the proof.
\end{proof} 

\cite{RefTK20} proposed to use the rank of the $L^2$ difference matrix
\begin{align*}
d_1(\boldsymbol{\Omega},\boldsymbol{K}_1,\boldsymbol{K}_2)=\E\left[\left(\hat{\boldsymbol{\beta}}(\boldsymbol{I}_n,\boldsymbol{K}_2)-\hat{\boldsymbol{\beta}}(\boldsymbol{\Omega},\boldsymbol{K}_1)\right)\left(\hat{\boldsymbol{\beta}}(\boldsymbol{I}_n,\boldsymbol{K}_2)-\hat{\boldsymbol{\beta}}(\boldsymbol{\Omega},\boldsymbol{K}_1)\right)^\top\right] 
\end{align*}
as a measure of the difference between two general ridge estimators $\hat{\boldsymbol{\beta}}(\boldsymbol{\Omega},\boldsymbol{K}_1)$ and $\hat{\boldsymbol{\beta}}(\boldsymbol{I}_n,\boldsymbol{K}_2)$. 
The following corollary provides a condition under which 
\begin{align*}
\rk\left(d_1(\boldsymbol{\Omega},\boldsymbol{K}_1,\boldsymbol{K}_2)\right) = 0.   
\end{align*}

\begin{cor}
Let $\boldsymbol{A}=(\boldsymbol{\Gamma}- \boldsymbol{\Xi} \boldsymbol{\Delta}^{-1}\boldsymbol{\Xi}^\top)^{-1}$.
From Proposition~\ref{pr36}, the two conditions \eqref{Th33} and \eqref{Pr35} both hold if and only if $d_1(\boldsymbol{\Omega},\boldsymbol{K}_1,\boldsymbol{K}_2)=\boldsymbol{0}$.
\end{cor}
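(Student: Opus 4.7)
My plan is to characterize $d_1 = \boldsymbol{0}$ as the simultaneous validity of \eqref{biascond} and \eqref{Covcond}, and then invoke Proposition~\ref{pr36} to rewrite this pair as \eqref{Th33} and \eqref{Pr35}. Writing $\hat{\boldsymbol{\beta}}_1 = \hat{\boldsymbol{\beta}}(\boldsymbol{\Omega},\boldsymbol{K}_1)$ and $\hat{\boldsymbol{\beta}}_2 = \hat{\boldsymbol{\beta}}(\boldsymbol{I}_n,\boldsymbol{K}_2)$ for brevity, the bias-variance identity
\[
d_1 = \bigl(\E[\hat{\boldsymbol{\beta}}_2] - \E[\hat{\boldsymbol{\beta}}_1]\bigr)\bigl(\E[\hat{\boldsymbol{\beta}}_2] - \E[\hat{\boldsymbol{\beta}}_1]\bigr)^\top + \V(\hat{\boldsymbol{\beta}}_2 - \hat{\boldsymbol{\beta}}_1)
\]
presents $d_1$ as a sum of two positive semidefinite matrices, so $d_1 = \boldsymbol{0}$ if and only if both summands vanish. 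The first summand is zero for every $\boldsymbol{\beta} \in \mathbb{R}^k$ precisely under \eqref{biascond}.

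The key step is to show that, under \eqref{biascond} (equivalently \eqref{Th33}), the second summand equals $\V(\hat{\boldsymbol{\beta}}_2) - \V(\hat{\boldsymbol{\beta}}_1)$. Expanding $\V(\hat{\boldsymbol{\beta}}_2 - \hat{\boldsymbol{\beta}}_1)$ in the usual way and using $\boldsymbol{X}^\top \boldsymbol{\Omega}^{-1} \boldsymbol{X} = \boldsymbol{A}$ from Lemma~\ref{OOI}, the cross-covariance becomes $\mathrm{Cov}(\hat{\boldsymbol{\beta}}_2, \hat{\boldsymbol{\beta}}_1) = \sigma^2(\boldsymbol{X}^\top \boldsymbol{X} + \boldsymbol{K}_2)^{-1} \boldsymbol{X}^\top \boldsymbol{X} (\boldsymbol{A} + \boldsymbol{K}_1)^{-1}$. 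Substituting $\boldsymbol{K}_2 = \boldsymbol{X}^\top \boldsymbol{X} \boldsymbol{A}^{-1} \boldsymbol{K}_1$ from \eqref{Th33} and using the factorizations $\boldsymbol{X}^\top\boldsymbol{X} + \boldsymbol{K}_2 = \boldsymbol{X}^\top\boldsymbol{X}(\boldsymbol{I}_k + \boldsymbol{A}^{-1}\boldsymbol{K}_1)$ together with $(\boldsymbol{A}+\boldsymbol{K}_1)^{-1} = \boldsymbol{A}^{-1}(\boldsymbol{I}_k + \boldsymbol{K}_1 \boldsymbol{A}^{-1})^{-1}$ reduces this cross-covariance to $\sigma^2(\boldsymbol{I}_k + \boldsymbol{A}^{-1}\boldsymbol{K}_1)^{-1}\boldsymbol{A}^{-1}(\boldsymbol{I}_k + \boldsymbol{K}_1 \boldsymbol{A}^{-1})^{-1}$, which is precisely the expression for $\V(\hat{\boldsymbol{\beta}}_1)$ already appearing in the proof of Proposition~\ref{pr36}. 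Since that matrix is symmetric, the transposed cross-covariance agrees with it as well, and the four-term expansion $\V(\hat{\boldsymbol{\beta}}_2) + \V(\hat{\boldsymbol{\beta}}_1) - \mathrm{Cov}(\hat{\boldsymbol{\beta}}_2,\hat{\boldsymbol{\beta}}_1) - \mathrm{Cov}(\hat{\boldsymbol{\beta}}_2,\hat{\boldsymbol{\beta}}_1)^\top$ collapses to $\V(\hat{\boldsymbol{\beta}}_2) - \V(\hat{\boldsymbol{\beta}}_1)$. Its vanishing is exactly \eqref{Covcond}.

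Combining the two observations, $d_1 = \boldsymbol{0}$ is equivalent to \eqref{biascond} and \eqref{Covcond} holding simultaneously, and Proposition~\ref{pr36} then delivers the equivalence to \eqref{Th33} and \eqref{Pr35}, completing the proof. The main obstacle is the cross-covariance identity $\mathrm{Cov}(\hat{\boldsymbol{\beta}}_2, \hat{\boldsymbol{\beta}}_1) = \V(\hat{\boldsymbol{\beta}}_1)$ under \eqref{Th33}—a modest but perhaps unexpected cancellation on which the reduction to $\V(\hat{\boldsymbol{\beta}}_2) - \V(\hat{\boldsymbol{\beta}}_1)$ entirely hinges.
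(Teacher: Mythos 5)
Your proof is correct and follows exactly the route the paper leaves implicit: the corollary is asserted to follow from Proposition~\ref{pr36}, i.e., from the equivalence of $d_1(\boldsymbol{\Omega},\boldsymbol{K}_1,\boldsymbol{K}_2)=\boldsymbol{0}$ with \eqref{biascond} and \eqref{Covcond} holding simultaneously, which is precisely what you establish via the bias--variance decomposition of $d_1$ into two positive semidefinite summands. Your verification of the cancellation $\V\bigl(\hat{\boldsymbol{\beta}}(\boldsymbol{I}_n,\boldsymbol{K}_2)-\hat{\boldsymbol{\beta}}(\boldsymbol{\Omega},\boldsymbol{K}_1)\bigr)=\V\bigl(\hat{\boldsymbol{\beta}}(\boldsymbol{I}_n,\boldsymbol{K}_2)\bigr)-\V\bigl(\hat{\boldsymbol{\beta}}(\boldsymbol{\Omega},\boldsymbol{K}_1)\bigr)$ under \eqref{Th33}, via the cross-covariance identity, is accurate and supplies the one step the paper omits.
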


\section{Equality between residual sums of squares}\label{sec:4}
In this section, we derive a necessary and sufficient condition for \eqref{GRSE}, which is the third main result of this paper.

\begin{thm}\label{thm2}
Let $\boldsymbol{\Omega}$ be expressed in \eqref{Orep}.
For given $\boldsymbol{K}_1, \boldsymbol{K}_2 \in \mathcal{S}^N(k)$, the equality \eqref{GRSE} holds if and only if the following three conditions simultaneously hold:
\begin{align}
& \boldsymbol{K}_1 (\boldsymbol{A} + \boldsymbol{K}_1)^{-1} \boldsymbol{A} (\boldsymbol{A} + \boldsymbol{K}_1)^{-1} \boldsymbol{K}_1 \notag \\
&= \boldsymbol{K}_2(\boldsymbol{X}^\top \boldsymbol{X} + \boldsymbol{K}_2)^{-1} \boldsymbol{X}^\top \boldsymbol{X} (\boldsymbol{X}^\top \boldsymbol{X} + \boldsymbol{K}_2)^{-1} \boldsymbol{K}_2, \label{Mcond21} \\
& \boldsymbol{K}_2(\boldsymbol{X}^\top \boldsymbol{X} + \boldsymbol{K}_2)^{-1} \boldsymbol{K}_2 \boldsymbol{\Xi} = \boldsymbol{0}, \label{Mcond22} \\
& \boldsymbol{\Delta} = (\boldsymbol{Z}^\top \boldsymbol{Z})^{-1}, \label{Mcond23}
\end{align}
where $\boldsymbol{A} = (\boldsymbol{\Gamma} - \boldsymbol{\Xi} \boldsymbol{\Delta}^{-1} \boldsymbol{\Xi}^\top)^{-1}$.
\end{thm}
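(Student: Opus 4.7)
Following the blueprint of the proof of Proposition~\ref{prop1}, set $\boldsymbol{M}_1 = (\boldsymbol{A}+\boldsymbol{K}_1)^{-1}$ and $\boldsymbol{M}_2 = (\boldsymbol{X}^\top\boldsymbol{X}+\boldsymbol{K}_2)^{-1}$, and define $\boldsymbol{L}_1 = \boldsymbol{I}_n - \boldsymbol{X}\boldsymbol{M}_1\boldsymbol{X}^\top\boldsymbol{\Omega}^{-1}$ and $\boldsymbol{L}_2 = \boldsymbol{I}_n - \boldsymbol{X}\boldsymbol{M}_2\boldsymbol{X}^\top$, so that $\boldsymbol{y}-\boldsymbol{X}\hat{\boldsymbol{\beta}}(\boldsymbol{\Omega},\boldsymbol{K}_1) = \boldsymbol{L}_1\boldsymbol{y}$ and $\boldsymbol{y}-\boldsymbol{X}\hat{\boldsymbol{\beta}}(\boldsymbol{I}_n,\boldsymbol{K}_2) = \boldsymbol{L}_2\boldsymbol{y}$. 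The equality \eqref{GRSE} is thus equivalent to the symmetric matrix identity $\boldsymbol{L}_1^\top\boldsymbol{\Omega}^{-1}\boldsymbol{L}_1 = \boldsymbol{L}_2^\top\boldsymbol{L}_2$, and by nonsingularity of $(\boldsymbol{X}\ \boldsymbol{Z})$ this breaks into three block equalities obtained by flanking with $\boldsymbol{X}^\top$ or $\boldsymbol{Z}^\top$ on the left and $\boldsymbol{X}$ or $\boldsymbol{Z}$ on the right.

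The reduction rests on the identities $\boldsymbol{L}_1\boldsymbol{X} = \boldsymbol{X}\boldsymbol{M}_1\boldsymbol{K}_1$, $\boldsymbol{L}_2\boldsymbol{X} = \boldsymbol{X}\boldsymbol{M}_2\boldsymbol{K}_2$, $\boldsymbol{L}_2\boldsymbol{Z} = \boldsymbol{Z}$, and $\boldsymbol{X}^\top\boldsymbol{\Omega}^{-1}\boldsymbol{Z} = \boldsymbol{B} = -\boldsymbol{A}\boldsymbol{\Xi}\boldsymbol{\Delta}^{-1}$ (from Lemma~\ref{OOI}), together with two consequences of $(\boldsymbol{A}+\boldsymbol{K}_1)\boldsymbol{M}_1 = \boldsymbol{I}_k$, namely $\boldsymbol{M}_1\boldsymbol{A}\boldsymbol{M}_1 = \boldsymbol{M}_1 - \boldsymbol{M}_1\boldsymbol{K}_1\boldsymbol{M}_1$ and $\boldsymbol{A}^{-1} - \boldsymbol{M}_1 - \boldsymbol{M}_1\boldsymbol{K}_1\boldsymbol{M}_1 = \boldsymbol{M}_1\boldsymbol{K}_1\boldsymbol{M}_1\boldsymbol{K}_1\boldsymbol{A}^{-1}$. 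Applying these, the $\boldsymbol{X}^\top\cdot\boldsymbol{X}$ block simplifies directly to $\boldsymbol{K}_1\boldsymbol{M}_1\boldsymbol{A}\boldsymbol{M}_1\boldsymbol{K}_1 = \boldsymbol{K}_2\boldsymbol{M}_2(\boldsymbol{X}^\top\boldsymbol{X})\boldsymbol{M}_2\boldsymbol{K}_2$, which is \eqref{Mcond21}. In the $\boldsymbol{X}^\top\cdot\boldsymbol{Z}$ block the $\boldsymbol{L}_2$-side vanishes because $\boldsymbol{X}^\top\boldsymbol{Z}=\boldsymbol{0}$, leaving $\boldsymbol{K}_1\boldsymbol{M}_1\boldsymbol{A}\boldsymbol{M}_1\boldsymbol{K}_1\boldsymbol{\Xi} = \boldsymbol{0}$; under \eqref{Mcond21} this reads $\boldsymbol{K}_2\boldsymbol{M}_2(\boldsymbol{X}^\top\boldsymbol{X})\boldsymbol{M}_2\boldsymbol{K}_2\boldsymbol{\Xi} = \boldsymbol{0}$, and via $(\boldsymbol{X}^\top\boldsymbol{X})\boldsymbol{M}_2 = \boldsymbol{I}_k - \boldsymbol{K}_2\boldsymbol{M}_2$ together with the invertibility of $\boldsymbol{M}_2$ and $\boldsymbol{X}^\top\boldsymbol{X}$ it is equivalent to \eqref{Mcond22}. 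Finally, the $\boldsymbol{Z}^\top\cdot\boldsymbol{Z}$ block becomes $\boldsymbol{D} - \boldsymbol{B}^\top(\boldsymbol{M}_1 + \boldsymbol{M}_1\boldsymbol{K}_1\boldsymbol{M}_1)\boldsymbol{B} = \boldsymbol{Z}^\top\boldsymbol{Z}$; substituting $\boldsymbol{D} = \boldsymbol{\Delta}^{-1} + \boldsymbol{B}^\top\boldsymbol{A}^{-1}\boldsymbol{B}$ and the second identity above, the cross term collects as $\boldsymbol{\Delta}^{-1}\boldsymbol{\Xi}^\top(\boldsymbol{K}_1\boldsymbol{M}_1\boldsymbol{A}\boldsymbol{M}_1\boldsymbol{K}_1)\boldsymbol{\Xi}\boldsymbol{\Delta}^{-1}$, which vanishes by the $\boldsymbol{X}^\top\cdot\boldsymbol{Z}$ block condition, leaving exactly \eqref{Mcond23}. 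The converse reverses the same chain of manipulations.

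The main obstacle is reconciling the natural form of the $\boldsymbol{X}^\top\cdot\boldsymbol{Z}$ block (in terms of $\boldsymbol{K}_1$, $\boldsymbol{M}_1$, $\boldsymbol{A}$) with the stated form \eqref{Mcond22} (in terms of $\boldsymbol{K}_2$, $\boldsymbol{M}_2$), which requires \eqref{Mcond21}; once that reduction is secured, the cancellation in the $\boldsymbol{Z}^\top\cdot\boldsymbol{Z}$ block that isolates $\boldsymbol{\Delta} = (\boldsymbol{Z}^\top\boldsymbol{Z})^{-1}$ follows automatically from the matrix identities listed above.
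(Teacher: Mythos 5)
Your proposal is correct and follows essentially the same route as the paper's proof: the same residual matrices (your $\boldsymbol{L}_1,\boldsymbol{L}_2$ are the paper's $\boldsymbol{S},\boldsymbol{T}$), the same reduction of \eqref{GRSE} to a symmetric matrix identity split into three blocks via the nonsingular matrix $(\boldsymbol{X}\quad\boldsymbol{Z})$, and the same logical ordering in which \eqref{Mcond21} is used to rewrite the $\boldsymbol{X}^\top\!\cdot\boldsymbol{Z}$ block and the vanishing of that block kills the cross term in the $\boldsymbol{Z}^\top\!\cdot\boldsymbol{Z}$ block, isolating \eqref{Mcond23}. Your early factorizations $\boldsymbol{L}_1\boldsymbol{X}=\boldsymbol{X}\boldsymbol{M}_1\boldsymbol{K}_1$ and $\boldsymbol{L}_2\boldsymbol{X}=\boldsymbol{X}\boldsymbol{M}_2\boldsymbol{K}_2$ merely streamline the paper's computation, which reaches the same expressions by expanding and then using $\boldsymbol{I}_k-\boldsymbol{A}(\boldsymbol{A}+\boldsymbol{K}_1)^{-1}=\boldsymbol{K}_1(\boldsymbol{A}+\boldsymbol{K}_1)^{-1}$ at the end.
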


\begin{proof}
For simplicity, the notation in \eqref{ABCD} is used. 
Let us denote
\begin{align*}
\boldsymbol{S}=\boldsymbol{I}_n-\boldsymbol{X}(\boldsymbol{X}^{\top}\boldsymbol{\Omega}^{-1}\boldsymbol{X}+\boldsymbol{K}_1)^{-1}\boldsymbol{X}^{\top}\boldsymbol{\Omega}^{-1}, \
\boldsymbol{T}=\boldsymbol{I}_n-\boldsymbol{X}(\boldsymbol{X}^{\top}\boldsymbol{X}+\boldsymbol{K}_2)^{-1}\boldsymbol{X}^{\top}.    
\end{align*}
Since
\begin{align*}
\boldsymbol{y}-\boldsymbol{X}\hat{\boldsymbol{\beta}}(\boldsymbol{\Omega},\boldsymbol{K}_1)
&= \boldsymbol{y}-\boldsymbol{X}(\boldsymbol{X}^{\top}\boldsymbol{\Omega}^{-1}\boldsymbol{X}+\boldsymbol{K}_1)^{-1}\boldsymbol{X}^{\top}\boldsymbol{\Omega}^{-1}\boldsymbol{y}\\
&= \left\{\boldsymbol{I}_n-\boldsymbol{X}(\boldsymbol{X}^{\top}\boldsymbol{\Omega}^{-1}\boldsymbol{X}+\boldsymbol{K}_1)^{-1}\boldsymbol{X}^{\top}\boldsymbol{\Omega}^{-1}\right\}\boldsymbol{y}
=\boldsymbol{S}\boldsymbol{y}
\end{align*}
and
\begin{align*}
\boldsymbol{y}-\boldsymbol{X}\hat{\boldsymbol{\beta}}(\boldsymbol{I}_n,\boldsymbol{K}_2)&= \boldsymbol{y}-\boldsymbol{X}(\boldsymbol{X}^{\top}\boldsymbol{X}+\boldsymbol{K}_2)^{-1}\boldsymbol{X}^{\top}\boldsymbol{y}\\
&=\left\{\boldsymbol{I}_n-\boldsymbol{X}(\boldsymbol{X}^{\top}\boldsymbol{X}+\boldsymbol{K}_2)^{-1}\boldsymbol{X}^{\top}\right\}\boldsymbol{y}
=\boldsymbol{T}\boldsymbol{y},
\end{align*}
the residual sums of squares $RSS(\boldsymbol{\Omega},\boldsymbol{K}_1)$ and $RSS(\boldsymbol{I}_n,\boldsymbol{K}_2)$ can be rewritten as
\begin{align*}
&RSS(\boldsymbol{\Omega},\boldsymbol{K}_1)=\left(\boldsymbol{y}-\boldsymbol{X}\hat{{\boldsymbol{\beta}}}(\boldsymbol{\Omega},\boldsymbol{K}_1)\right)^{\top}\boldsymbol{\Omega}^{-1}\left(\boldsymbol{y}-\boldsymbol{X}\hat{{\boldsymbol{\beta}}}(\boldsymbol{\Omega},\boldsymbol{K}_1)\right)=\boldsymbol{y}^{\top}\boldsymbol{S}^{\top}\boldsymbol{\Omega}^{-1} \boldsymbol{S}\boldsymbol{y}
\end{align*}
and
\begin{align*}
&RSS(\boldsymbol{I}_n,\boldsymbol{K}_2)=\left(\boldsymbol{y}-\boldsymbol{X}\hat{{\boldsymbol{\beta}}}(\boldsymbol{I}_n,\boldsymbol{K}_2)\right)^{\top} \left(\boldsymbol{y}-\boldsymbol{X}\hat{{\boldsymbol{\beta}}}(\boldsymbol{I}_n,\boldsymbol{K}_2)\right)=\boldsymbol{y}^{\top}\boldsymbol{T}\boldsymbol{T} \boldsymbol{y} ,
\end{align*}
respectively.
Thus, the problem is to derive a condition under which $\boldsymbol{y}^\top (\boldsymbol{S}^{\top}\boldsymbol{\Omega}^{-1} \boldsymbol{S}-\boldsymbol{T}\boldsymbol{T}  )\boldsymbol{y}=0$ holds for any $\boldsymbol{y}\in \mathbb{R}^{n}$, which is equivalent to deriving a condition for
\begin{equation}\label{RSSK}
 \boldsymbol{S}^{\top}\boldsymbol{\Omega}^{-1} \boldsymbol{S}-\boldsymbol{T}\boldsymbol{T}=\boldsymbol{0} .
\end{equation}
The equality \eqref{RSSK}  holds if and only if the following three equalities simultaneously hold:
\begin{align}
\boldsymbol{X}^\top (\boldsymbol{S}^{\top}\boldsymbol{\Omega}^{-1} \boldsymbol{S}-\boldsymbol{T}\boldsymbol{T}  ) \boldsymbol{X} =\boldsymbol{0}, \label{XXK} \\
\boldsymbol{X}^\top (\boldsymbol{S}^{\top}\boldsymbol{\Omega}^{-1} \boldsymbol{S}-\boldsymbol{T}\boldsymbol{T}  )  \boldsymbol{Z} =\boldsymbol{0}, \label{XZK} \\
\boldsymbol{Z}^\top (\boldsymbol{S}^{\top}\boldsymbol{\Omega}^{-1} \boldsymbol{S}-\boldsymbol{T}\boldsymbol{T}  ) \boldsymbol{Z} =\boldsymbol{0}. \label{ZZK}
\end{align}
The quantities $\boldsymbol{X}^{\top}\boldsymbol{S}^{\top}\boldsymbol{\Omega}^{-1}\boldsymbol{S}\boldsymbol{X},\boldsymbol{X}^{\top}\boldsymbol{S}^{\top}\boldsymbol{\Omega}^{-1}\boldsymbol{S}\boldsymbol{Z}$, and $\boldsymbol{Z}^{\top}\boldsymbol{S}^{\top}\boldsymbol{\Omega}^{-1}\boldsymbol{S}\boldsymbol{Z}$ are calculated as follows:
\begin{align*}
&\boldsymbol{X}^{\top}\boldsymbol{S}^{\top}\boldsymbol{\Omega}^{-1}\boldsymbol{S}\boldsymbol{X}\\
& =\boldsymbol{X}^{\top}\{\boldsymbol{I}_n-\boldsymbol{\Omega}^{-1}\boldsymbol{X}(\boldsymbol{X}^{\top}\boldsymbol{\Omega}^{-1}\boldsymbol{X}+\boldsymbol{K}_1)^{-1}\boldsymbol{X}^{\top}\}\boldsymbol{\Omega}^{-1}\{\boldsymbol{I}_n-\boldsymbol{X}(\boldsymbol{X}^{\top}\boldsymbol{\Omega}^{-1}\boldsymbol{X}+\boldsymbol{K}_1)^{-1}\boldsymbol{X}^{\top}\boldsymbol{\Omega}^{-1}\}\boldsymbol{X} \\ 
&=\{\boldsymbol{X}^{\top}-\boldsymbol{A}(\boldsymbol{A}+\boldsymbol{K}_1)^{-1}\boldsymbol{X}^{\top}\}\boldsymbol{\Omega}^{-1}\{\boldsymbol{X}-\boldsymbol{X}(\boldsymbol{A}+\boldsymbol{K}_1)^{-1}\boldsymbol{A}\} \\
&=\{\boldsymbol{I}_k-\boldsymbol{A}(\boldsymbol{A}+\boldsymbol{K}_1)^{-1}\}\boldsymbol{X}^{\top}\boldsymbol{\Omega}^{-1} \boldsymbol{X}\{\boldsymbol{I}_k-(\boldsymbol{A}+\boldsymbol{K}_1)^{-1}\boldsymbol{A}\} \\
&=\{\boldsymbol{I}_k-\boldsymbol{A}(\boldsymbol{A}+\boldsymbol{K}_1)^{-1}\}\boldsymbol{A}\{\boldsymbol{I}_k-(\boldsymbol{A}+\boldsymbol{K}_1)^{-1}\boldsymbol{A}\},
\end{align*}
\begin{align}\label{EQA}
&\boldsymbol{X}^{\top}\boldsymbol{S}^{\top}\boldsymbol{\Omega}^{-1}\boldsymbol{S}\boldsymbol{Z} \notag\\
&=\boldsymbol{X}^{\top}\{\boldsymbol{I}_n-\boldsymbol{\Omega}^{-1}\boldsymbol{X}(\boldsymbol{X}^{\top}\boldsymbol{\Omega}^{-1}\boldsymbol{X}+\boldsymbol{K}_1)^{-1}\boldsymbol{X}^{\top}\}\boldsymbol{\Omega}^{-1}\{\boldsymbol{I}_n-\boldsymbol{X}(\boldsymbol{X}^{\top}\boldsymbol{\Omega}^{-1}\boldsymbol{X}+\boldsymbol{K}_1)^{-1}\boldsymbol{X}^{\top}\boldsymbol{\Omega}^{-1}\}\boldsymbol{Z}\notag\\
&=\{\boldsymbol{X}^{\top}-\boldsymbol{A}(\boldsymbol{A}+\boldsymbol{K}_1)^{-1}\boldsymbol{X}^{\top}\}\boldsymbol{\Omega}^{-1}\{\boldsymbol{Z}-\boldsymbol{X}(\boldsymbol{A}+\boldsymbol{K}_1)^{-1}\boldsymbol{B}\}\notag\\
&=\{\boldsymbol{I}_k-\boldsymbol{A}(\boldsymbol{A}+\boldsymbol{K}_1)^{-1}\}\boldsymbol{X}^{\top}\boldsymbol{\Omega}^{-1}\{\boldsymbol{Z}-\boldsymbol{X}(\boldsymbol{A}+\boldsymbol{K}_1)^{-1}\boldsymbol{B}\}\notag\\
&=\{\boldsymbol{I}_k-\boldsymbol{A}(\boldsymbol{A}+\boldsymbol{K}_1)^{-1}\}\{\boldsymbol{X}^{\top}\boldsymbol{\Omega}^{-1}\boldsymbol{Z}-\boldsymbol{X}^{\top}\boldsymbol{\Omega}^{-1}\boldsymbol{X}(\boldsymbol{A}+\boldsymbol{K}_1)^{-1}\boldsymbol{B}\}\notag\\
&=\{\boldsymbol{I}_k-\boldsymbol{A}(\boldsymbol{A}+\boldsymbol{K}_1)^{-1}\}\{\boldsymbol{B}-\boldsymbol{A}(\boldsymbol{A}+\boldsymbol{K}_1)^{-1}\boldsymbol{B}\}\notag\\
&=\{\boldsymbol{I}_k-\boldsymbol{A}(\boldsymbol{A}+\boldsymbol{K}_1)^{-1}\}\{\boldsymbol{I}_k-\boldsymbol{A}(\boldsymbol{A}+\boldsymbol{K}_1)^{-1}\}\boldsymbol{B} \notag\\
&=-\{\boldsymbol{I}_k-\boldsymbol{A}(\boldsymbol{A}+\boldsymbol{K}_1)^{-1}\}\{\boldsymbol{I}_k-\boldsymbol{A}(\boldsymbol{A}+\boldsymbol{K}_1)^{-1}\}\boldsymbol{A}\,\boldsymbol{\Xi} \boldsymbol{\Delta}^{-1}\notag\\
&=-\{\boldsymbol{I}_k-\boldsymbol{A}(\boldsymbol{A}+\boldsymbol{K}_1)^{-1}\}\{\boldsymbol{A}-\boldsymbol{A}(\boldsymbol{A}+\boldsymbol{K}_1)^{-1}\boldsymbol{A}\}\,\boldsymbol{\Xi} \boldsymbol{\Delta}^{-1}\notag\\
&=-\{\boldsymbol{I}_k-\boldsymbol{A}(\boldsymbol{A}+\boldsymbol{K}_1)^{-1}\}\boldsymbol{A}\{\boldsymbol{I}_k-(\boldsymbol{A}+\boldsymbol{K}_1)^{-1}\boldsymbol{A}\}\,\boldsymbol{\Xi} \boldsymbol{\Delta}^{-1},
\end{align}
and
\begin{align}
&\boldsymbol{Z}^{\top}\boldsymbol{S}^{\top}\boldsymbol{\Omega}^{-1}\boldsymbol{S}\boldsymbol{Z} \notag\\
&= \boldsymbol{Z}^{\top}\{\boldsymbol{I}_n-\boldsymbol{\Omega}^{-1}\boldsymbol{X}(\boldsymbol{X}^{\top}\boldsymbol{\Omega}^{-1}\boldsymbol{X}+\boldsymbol{K}_1)^{-1}\boldsymbol{X}^{\top}\}\boldsymbol{\Omega}^{-1}\{\boldsymbol{I}_n-\boldsymbol{X}(\boldsymbol{X}^{\top}\boldsymbol{\Omega}^{-1}\boldsymbol{X}+\boldsymbol{K}_1)^{-1}\boldsymbol{X}^{\top}\boldsymbol{\Omega}^{-1}\}\boldsymbol{Z}\notag\\
&=\{\boldsymbol{Z}^{\top}-\boldsymbol{C}(\boldsymbol{A}+\boldsymbol{K}_1)^{-1}\boldsymbol{X}^{\top}\}\boldsymbol{\Omega}^{-1}\{\boldsymbol{Z}-\boldsymbol{X}(\boldsymbol{A}+\boldsymbol{K}_1)^{-1}\boldsymbol{B}\}\notag\\ 
&=\boldsymbol{Z}^{\top}\boldsymbol{\Omega}^{-1}\boldsymbol{Z}-\boldsymbol{Z}^{\top}\boldsymbol{\Omega}^{-1}\boldsymbol{X}(\boldsymbol{A}+\boldsymbol{K}_1)^{-1}\boldsymbol{B}-\boldsymbol{C}(\boldsymbol{A}+\boldsymbol{K}_1)^{-1}\boldsymbol{X}^{\top}\boldsymbol{\Omega}^{-1}\boldsymbol{Z}\notag\\ 
&+\boldsymbol{C}(\boldsymbol{A}+\boldsymbol{K}_1)^{-1}\boldsymbol{X}^{\top}\boldsymbol{\Omega}^{-1}\boldsymbol{X}(\boldsymbol{A}+\boldsymbol{K}_1)^{-1}\boldsymbol{B}\notag\\ 
&= \boldsymbol{D}-\boldsymbol{C}(\boldsymbol{A}+\boldsymbol{K}_1)^{-1}\boldsymbol{B}-\boldsymbol{C}(\boldsymbol{A}+\boldsymbol{K}_1)^{-1}\boldsymbol{B}+\boldsymbol{C}(\boldsymbol{A}+\boldsymbol{K}_1)^{-1}\boldsymbol{A}(\boldsymbol{A}+\boldsymbol{K}_1)^{-1}\boldsymbol{B}\notag\\ 
&= \boldsymbol{\Delta}^{-1}+\boldsymbol{B}^{\top}\boldsymbol{A}^{-1}\boldsymbol{B}-2\boldsymbol{B}^{\top}(\boldsymbol{A}+\boldsymbol{K}_1)^{-1}\boldsymbol{B}+\boldsymbol{B}^{\top}(\boldsymbol{A}+\boldsymbol{K}_1)^{-1}\boldsymbol{A}(\boldsymbol{A}+\boldsymbol{K}_1)^{-1}\boldsymbol{B}\notag\\
&= \boldsymbol{\Delta}^{-1}+\boldsymbol{B}^{\top}\{\boldsymbol{A}^{-1}-2(\boldsymbol{A}+\boldsymbol{K}_1)^{-1}+(\boldsymbol{A}+\boldsymbol{K}_1)^{-1}\boldsymbol{A}(\boldsymbol{A}+\boldsymbol{K}_1)^{-1}\}\boldsymbol{B}\notag\\ 
&=  \boldsymbol{\Delta}^{-1}+\boldsymbol{B}^{\top}\{\boldsymbol{A}^{-1}-(\boldsymbol{A}+\boldsymbol{K}_1)^{-1}\}\boldsymbol{A}\{\boldsymbol{A}^{-1}-(\boldsymbol{A}+\boldsymbol{K}_1)^{-1}\}\boldsymbol{B}\notag\\ 
&=  \boldsymbol{\Delta}^{-1}+\boldsymbol{\Delta}^{-1}\boldsymbol{\Xi}^{\top}\boldsymbol{A}\{\boldsymbol{A}^{-1}-(\boldsymbol{A}+\boldsymbol{K}_1)^{-1}\}\boldsymbol{A}\{\boldsymbol{A}^{-1}-(\boldsymbol{A}+\boldsymbol{K}_1)^{-1}\}\boldsymbol{A}\boldsymbol{\Xi}\boldsymbol{\Delta}^{-1}\notag\\
&=  \boldsymbol{\Delta}^{-1}+\boldsymbol{\Delta}^{-1}\boldsymbol{\Xi}^{\top}\{\boldsymbol{I}_k - \boldsymbol{A}(\boldsymbol{A}+\boldsymbol{K}_1)^{-1}\}\boldsymbol{A}\{\boldsymbol{I}_k-(\boldsymbol{A}+\boldsymbol{K}_1)^{-1}\boldsymbol{A}\}\boldsymbol{\Xi}\boldsymbol{\Delta}^{-1}. \notag
\end{align}
A straightforward calculation yields that
\begin{align*}
\boldsymbol{T}\boldsymbol{T}= \boldsymbol{I}_n -2\boldsymbol{X}(\boldsymbol{X}^{\top}\boldsymbol{X}+\boldsymbol{K}_2)^{-1}\boldsymbol{X}^\top + \boldsymbol{X}(\boldsymbol{X}^{\top}\boldsymbol{X}+\boldsymbol{K}_2)^{-1}\boldsymbol{X}^\top \boldsymbol{X}(\boldsymbol{X}^{\top}\boldsymbol{X}+\boldsymbol{K}_2)^{-1}\boldsymbol{X}^\top.
\end{align*} 
The equality \eqref{XXK} can be rewritten as 
\begin{align}\label{XXK2}
\{\boldsymbol{I}_k-\boldsymbol{A}(\boldsymbol{A}+\boldsymbol{K}_1)^{-1}\}\boldsymbol{A}\{\boldsymbol{I}_k-(\boldsymbol{A}+\boldsymbol{K}_1)^{-1}\boldsymbol{A}\}=\boldsymbol{X}^{\top}\boldsymbol{T}\boldsymbol{T}\boldsymbol{X},    
\end{align}
and \eqref{XZK} is equivalent to 
\begin{align}\label{XZK2}
& -\{\boldsymbol{I}_k-\boldsymbol{A}(\boldsymbol{A}+\boldsymbol{K}_1)^{-1}\}\boldsymbol{A}\{\boldsymbol{I}_k-(\boldsymbol{A}+\boldsymbol{K}_1)^{-1}\boldsymbol{A}\}\boldsymbol{\Xi} \boldsymbol{\Delta}^{-1}=\boldsymbol{0} \notag\\
\Leftrightarrow \quad & \{\boldsymbol{I}_k-\boldsymbol{A}(\boldsymbol{A}+\boldsymbol{K}_1)^{-1}\}\boldsymbol{A}\{\boldsymbol{I}_k-(\boldsymbol{A}+\boldsymbol{K}_1)^{-1}\boldsymbol{A}\}\boldsymbol{\Xi}=\boldsymbol{0} \notag\\ 
\Leftrightarrow \quad & \boldsymbol{X}^{\top}\boldsymbol{T}\boldsymbol{T}\boldsymbol{X}\boldsymbol{\Xi}=\boldsymbol{0}
\end{align}
under \eqref{XXK2}.
Moreover, since the equality $\eqref{EQA}=\boldsymbol{0}$ holds, the equality \eqref{ZZK} is equivalent to
\begin{align*}
&\boldsymbol{\Delta}^{-1}+\boldsymbol{\Delta}^{-1}\boldsymbol{\Xi}^{\top}\{\boldsymbol{I}_k - \boldsymbol{A}(\boldsymbol{A}+\boldsymbol{K}_1)^{-1}\}\boldsymbol{A}\{\boldsymbol{I}_k-(\boldsymbol{A}+\boldsymbol{K}_1)^{-1}\boldsymbol{A}\}\boldsymbol{\Xi}\boldsymbol{\Delta}^{-1}=\boldsymbol{Z}^{\top}\boldsymbol{Z} \\
\Leftrightarrow \quad & \boldsymbol{\Delta}^{-1}=\boldsymbol{Z}^{\top}\boldsymbol{Z} \\
\Leftrightarrow \quad & \boldsymbol{\Delta}=(\boldsymbol{Z}^{\top}\boldsymbol{Z} )^{-1} .
\end{align*}
The equalities \eqref{XXK2} and \eqref{XZK2} can be rewritten as follows:
\begin{align*}
&\{\boldsymbol{I}_k-\boldsymbol{A}(\boldsymbol{A}+\boldsymbol{K}_1)^{-1}\}\boldsymbol{A}\{\boldsymbol{I}_k-(\boldsymbol{A}+\boldsymbol{K}_1)^{-1}\boldsymbol{A}\}=\boldsymbol{X}^{\top}\boldsymbol{T}\boldsymbol{T}\boldsymbol{X}\\   
\Leftrightarrow \quad &\{\boldsymbol{I}_k-\boldsymbol{A}(\boldsymbol{A}+\boldsymbol{K}_1)^{-1}\}\boldsymbol{A}\{\boldsymbol{I}_k-(\boldsymbol{A}+\boldsymbol{K}_1)^{-1}\boldsymbol{A}\}\\
&= \boldsymbol{X}^{\top}\{\boldsymbol{I}_n-\boldsymbol{X}(\boldsymbol{X}^{\top}\boldsymbol{X}+\boldsymbol{K}_2)^{-1}\boldsymbol{X}^{\top}\}\{\boldsymbol{I}_n-\boldsymbol{X}(\boldsymbol{X}^{\top}\boldsymbol{X}+\boldsymbol{K}_2)^{-1}\boldsymbol{X}^{\top}\}\boldsymbol{X}\\
\Leftrightarrow\quad &\{\boldsymbol{I}_k-\boldsymbol{A}(\boldsymbol{A}+\boldsymbol{K}_1)^{-1}\}\boldsymbol{A}\{\boldsymbol{I}_k-(\boldsymbol{A}+\boldsymbol{K}_1)^{-1}\boldsymbol{A}\}\\
&= \{\boldsymbol{X}^{\top}-\boldsymbol{X}^{\top}\boldsymbol{X}(\boldsymbol{X}^{\top}\boldsymbol{X}+\boldsymbol{K}_2)^{-1}\boldsymbol{X}^{\top}\}\{\boldsymbol{X}-\boldsymbol{X}(\boldsymbol{X}^{\top}\boldsymbol{X}+\boldsymbol{K}_2)^{-1}\boldsymbol{X}^{\top}\boldsymbol{X}\}\\
\Leftrightarrow\quad &\{\boldsymbol{I}_k-\boldsymbol{A}(\boldsymbol{A}+\boldsymbol{K}_1)^{-1}\}\boldsymbol{A}\{\boldsymbol{I}_k-(\boldsymbol{A}+\boldsymbol{K}_1)^{-1}\boldsymbol{A}\}\\
&= \{\boldsymbol{I}_k-\boldsymbol{X}^{\top}\boldsymbol{X}(\boldsymbol{X}^{\top}\boldsymbol{X}+\boldsymbol{K}_2)^{-1}\}\boldsymbol{X}^{\top}\boldsymbol{X}\{\boldsymbol{I}_k-(\boldsymbol{X}^{\top}\boldsymbol{X}+\boldsymbol{K}_2)^{-1}\boldsymbol{X}^{\top}\boldsymbol{X}\}\\
\Leftrightarrow\quad &\{\boldsymbol{I}_k-(\boldsymbol{A}+\boldsymbol{K}_1-\boldsymbol{K}_1)(\boldsymbol{A}+\boldsymbol{K}_1)^{-1}\}\boldsymbol{A}\{\boldsymbol{I}_k-(\boldsymbol{A}+\boldsymbol{K}_1)^{-1}(\boldsymbol{A}+\boldsymbol{K}_1-\boldsymbol{K}_1)\}\\
&=\{\boldsymbol{I}_k-(\boldsymbol{X}^{\top}\boldsymbol{X}+\boldsymbol{K}_2-\boldsymbol{K}_2)(\boldsymbol{X}^{\top}\boldsymbol{X}+\boldsymbol{K}_2)^{-1}\}\boldsymbol{X}^{\top}\boldsymbol{X}\{\boldsymbol{I}_k-(\boldsymbol{X}^{\top}\boldsymbol{X}+\boldsymbol{K}_2)^{-1}(\boldsymbol{X}^{\top}\boldsymbol{X}+\boldsymbol{K}_2-\boldsymbol{K}_2)\}\\
\Leftrightarrow\quad &\{\boldsymbol{I}_k-\boldsymbol{I}_k+\boldsymbol{K}_1(\boldsymbol{A}+\boldsymbol{K}_1)^{-1}\}\boldsymbol{A}\{\boldsymbol{I}_k-\boldsymbol{I}_k+(\boldsymbol{A}+\boldsymbol{K}_1)^{-1}\boldsymbol{K}_1\}\\
&= \{\boldsymbol{I}_k-\boldsymbol{I}_k+\boldsymbol{K}_2(\boldsymbol{X}^{\top}\boldsymbol{X}+\boldsymbol{K}_2)^{-1}\}\boldsymbol{X}^{\top}\boldsymbol{X}\{\boldsymbol{I}_k-\boldsymbol{I}_k+(\boldsymbol{X}^{\top}\boldsymbol{X}+\boldsymbol{K}_2)^{-1}\boldsymbol{K}_2\}\\
\Leftrightarrow\quad &\boldsymbol{K}_1(\boldsymbol{A}+\boldsymbol{K}_1)^{-1}\boldsymbol{A}(\boldsymbol{A}+\boldsymbol{K}_1)^{-1}\boldsymbol{K}_1=\boldsymbol{K}_2(\boldsymbol{X}^{\top}\boldsymbol{X}+\boldsymbol{K}_2)^{-1}\boldsymbol{X}^{\top}\boldsymbol{X}(\boldsymbol{X}^{\top}\boldsymbol{X}+\boldsymbol{K}_2)^{-1}\boldsymbol{K}_2,
\end{align*}
and 
\allowdisplaybreaks\begin{align*}
&\boldsymbol{X}^{\top}\boldsymbol{T}\boldsymbol{T}\boldsymbol{X}\boldsymbol{\Xi}=\boldsymbol{0}\\
\Leftrightarrow \quad &\{\boldsymbol{X}^{\top}\boldsymbol{X}-2\boldsymbol{X}^{\top}\boldsymbol{X}(\boldsymbol{X}^{\top}\boldsymbol{X}+\boldsymbol{K}_2)^{-1}\boldsymbol{X}^{\top}\boldsymbol{X}\\
&+\boldsymbol{X}^{\top}\boldsymbol{X}(\boldsymbol{X}^{\top}\boldsymbol{X}+\boldsymbol{K}_2)^{-1}\boldsymbol{X}^{\top}\boldsymbol{X}(\boldsymbol{X}^{\top}\boldsymbol{X}+\boldsymbol{K}_2)^{-1}\boldsymbol{X}^{\top}\boldsymbol{X}\}\boldsymbol{\Xi}=\boldsymbol{0}\\
\Leftrightarrow \quad &\{\boldsymbol{I}_k-2(\boldsymbol{X}^{\top}\boldsymbol{X}+\boldsymbol{K}_2)^{-1}\boldsymbol{X}^{\top}\boldsymbol{X}+(\boldsymbol{X}^{\top}\boldsymbol{X}+\boldsymbol{K}_2)^{-1}\boldsymbol{X}^{\top}\boldsymbol{X}(\boldsymbol{X}^{\top}\boldsymbol{X}+\boldsymbol{K}_2)^{-1}\boldsymbol{X}^{\top}\boldsymbol{X}\}\boldsymbol{\Xi}=\boldsymbol{0}\\
\Leftrightarrow \quad &\{(\boldsymbol{X}^{\top}\boldsymbol{X}+\boldsymbol{K}_2)-2\boldsymbol{X}^{\top}\boldsymbol{X}+\boldsymbol{X}^{\top}\boldsymbol{X}(\boldsymbol{X}^{\top}\boldsymbol{X}+\boldsymbol{K}_2)^{-1}\boldsymbol{X}^{\top}\boldsymbol{X}\}\boldsymbol{\Xi}=\boldsymbol{0}\\
\Leftrightarrow \quad &\{\boldsymbol{K}_2-\boldsymbol{X}^{\top}\boldsymbol{X}+(\boldsymbol{X}^{\top}\boldsymbol{X}+\boldsymbol{K}_2-\boldsymbol{K}_2)(\boldsymbol{X}^{\top}\boldsymbol{X}+\boldsymbol{K}_2)^{-1}\boldsymbol{X}^{\top}\boldsymbol{X}\}\boldsymbol{\Xi}=\boldsymbol{0}\\
\Leftrightarrow \quad &\{\boldsymbol{K}_2-\boldsymbol{X}^{\top}\boldsymbol{X}+\boldsymbol{X}^{\top}\boldsymbol{X}-\boldsymbol{K}_2(\boldsymbol{X}^{\top}\boldsymbol{X}+\boldsymbol{K}_2)^{-1}\boldsymbol{X}^{\top}\boldsymbol{X}\}\boldsymbol{\Xi}=\boldsymbol{0}\\
\Leftrightarrow \quad &\{\boldsymbol{K}_2-\boldsymbol{K}_2(\boldsymbol{X}^{\top}\boldsymbol{X}+\boldsymbol{K}_2)^{-1}\boldsymbol{X}^{\top}\boldsymbol{X}\}\boldsymbol{\Xi}=\boldsymbol{0}\\
\Leftrightarrow \quad &\boldsymbol{K}_2\{\boldsymbol{I}_k-(\boldsymbol{X}^{\top}\boldsymbol{X}+\boldsymbol{K}_2)^{-1}\boldsymbol{X}^{\top}\boldsymbol{X}\}\boldsymbol{\Xi}=\boldsymbol{0}\\
\Leftrightarrow \quad &\boldsymbol{K}_2\{\boldsymbol{I}_k-(\boldsymbol{X}^{\top}\boldsymbol{X}+\boldsymbol{K}_2)^{-1}(\boldsymbol{X}^{\top}\boldsymbol{X}+\boldsymbol{K}_2-\boldsymbol{K}_2)\}\boldsymbol{\Xi}=\boldsymbol{0}\\
\Leftrightarrow\quad &\boldsymbol{K}_2\{\boldsymbol{I}_k-\boldsymbol{I}_k+(\boldsymbol{X}^{\top}\boldsymbol{X}+\boldsymbol{K}_2)^{-1}\boldsymbol{K}_2\}\boldsymbol{\Xi}=\boldsymbol{0}\\
\Leftrightarrow \quad &\boldsymbol{K}_2(\boldsymbol{X}^{\top}\boldsymbol{X}+\boldsymbol{K}_2)^{-1}\boldsymbol{K}_2\boldsymbol{\Xi}=\boldsymbol{0}.
\end{align*}
This completes the proof.
\end{proof}

\begin{rem}
Let $\boldsymbol{K}_1 = \boldsymbol{K}_2 = \boldsymbol{0}$. 
Then, \eqref{Mcond21} and \eqref{Mcond22} are satisfied, so Theorem~\ref{thm2} is reduced to Proposition~\ref{prop1}.
\end{rem}

Hereafter, we consider the case of $\boldsymbol{K}_1,\boldsymbol{K}_2 \in \mathcal{S}^+(k)$ and proceed to the discussion of Theorem~\ref{thm2}.

\begin{cor}
For given  $\boldsymbol{K}_1, \boldsymbol{K}_2 \in \mathcal{S}^+(k)$, if \eqref{GRSE} holds, then 
$\hat{\boldsymbol{\beta}}(\boldsymbol{\Omega}, \boldsymbol{0}) = \hat{\boldsymbol{\beta}} (\boldsymbol{I}_n, \boldsymbol{0})$ and $RSS(\boldsymbol{\Omega},\boldsymbol{0}) = RSS(\boldsymbol{I}_n,\boldsymbol{0})$ both hold for any  $\boldsymbol{y} \in \mathbb{R}^n$.
\end{cor}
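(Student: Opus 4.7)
The plan is to unpack the three conditions of Theorem~\ref{thm2} under the extra hypothesis $\boldsymbol{K}_1, \boldsymbol{K}_2 \in \mathcal{S}^+(k)$, and show that they already force the structural form \eqref{Rcond} of $\boldsymbol{\Omega}$ together with $\boldsymbol{\Delta} = (\boldsymbol{Z}^\top \boldsymbol{Z})^{-1}$. Once this structural information is in hand, the two unbiased equalities follow from known results cited in the paper.

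The first step is to apply Theorem~\ref{thm2} directly: since \eqref{GRSE} holds by assumption, conditions \eqref{Mcond21}, \eqref{Mcond22}, \eqref{Mcond23} all hold. Next, I would extract $\boldsymbol{\Xi} = \boldsymbol{0}$ from \eqref{Mcond22}. Because $\boldsymbol{K}_2 \in \mathcal{S}^+(k)$ and $(\boldsymbol{X}^\top \boldsymbol{X} + \boldsymbol{K}_2)^{-1} \in \mathcal{S}^+(k)$, the matrix $\boldsymbol{K}_2 (\boldsymbol{X}^\top \boldsymbol{X} + \boldsymbol{K}_2)^{-1} \boldsymbol{K}_2$ is positive definite, hence nonsingular, so \eqref{Mcond22} forces $\boldsymbol{\Xi} = \boldsymbol{0}$. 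Combined with \eqref{Mcond23}, the representation \eqref{Orep} collapses to $\boldsymbol{\Omega} = \boldsymbol{X}\boldsymbol{\Gamma}\boldsymbol{X}^\top + \boldsymbol{Z}(\boldsymbol{Z}^\top\boldsymbol{Z})^{-1}\boldsymbol{Z}^\top$, which is of the Rao form \eqref{Rcond}.

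Finally, I would invoke two earlier results to conclude. On the one hand, by \cite{RefR67}, the validity of \eqref{Rcond} is equivalent to \eqref{OBE}, yielding $\hat{\boldsymbol{\beta}}(\boldsymbol{\Omega}, \boldsymbol{0}) = \hat{\boldsymbol{\beta}}(\boldsymbol{I}_n, \boldsymbol{0})$ for any $\boldsymbol{y} \in \mathbb{R}^n$. On the other hand, because $\boldsymbol{\Delta} = (\boldsymbol{Z}^\top \boldsymbol{Z})^{-1}$, Proposition~\ref{prop1} gives $RSS(\boldsymbol{\Omega},\boldsymbol{0}) = RSS(\boldsymbol{I}_n, \boldsymbol{0})$ for any $\boldsymbol{y} \in \mathbb{R}^n$. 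This completes the argument.

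There is essentially no real obstacle here; the only place where the positive definiteness assumption on $\boldsymbol{K}_1, \boldsymbol{K}_2$ is actually used is in cancelling the sandwich $\boldsymbol{K}_2 (\boldsymbol{X}^\top \boldsymbol{X} + \boldsymbol{K}_2)^{-1} \boldsymbol{K}_2$ in \eqref{Mcond22} to conclude $\boldsymbol{\Xi} = \boldsymbol{0}$. Condition \eqref{Mcond21} is not needed at all for the stated conclusion; it records the residual information relating $\boldsymbol{K}_1$ and $\boldsymbol{K}_2$, but plays no role in deducing either $\hat{\boldsymbol{\beta}}(\boldsymbol{\Omega}, \boldsymbol{0}) = \hat{\boldsymbol{\beta}}(\boldsymbol{I}_n, \boldsymbol{0})$ or $RSS(\boldsymbol{\Omega},\boldsymbol{0}) = RSS(\boldsymbol{I}_n, \boldsymbol{0})$.
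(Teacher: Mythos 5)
Your proposal is correct and is exactly the argument the paper intends: the paper states this corollary without proof, but its proof of Corollary~\ref{cor43} uses the identical mechanism, namely that positive definiteness of $\boldsymbol{K}_2$ makes $\boldsymbol{K}_2(\boldsymbol{X}^\top\boldsymbol{X}+\boldsymbol{K}_2)^{-1}\boldsymbol{K}_2$ nonsingular so \eqref{Mcond22} forces $\boldsymbol{\Xi}=\boldsymbol{0}$, after which \eqref{Rcond} with \cite{RefR67} gives the estimator equality and \eqref{Mcond23} with Proposition~\ref{prop1} gives the $RSS$ equality. Your observation that \eqref{Mcond21} is not needed for this conclusion is also accurate.
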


\begin{cor}\label{cor43}
Let $\boldsymbol{\Omega}$ be expressed in \eqref{Orep}.
For given $\boldsymbol{K} \in \mathcal{S}^+(k)$, $RSS(\boldsymbol{\Omega}, \boldsymbol{K}) = RSS(\boldsymbol{I}_n,\boldsymbol{K}) \ \mbox{for any} \ \boldsymbol{y} \in \mathbb{R}^n$  holds if and only if the following two conditions both hold:
\begin{align}
&  \boldsymbol{K}\{\boldsymbol{\Gamma}-(\boldsymbol{X}^{\top}\boldsymbol{X})^{-1}\}\boldsymbol{K}=\boldsymbol{X}^{\top}\boldsymbol{X}-\boldsymbol{\Gamma}^{-1}, \label{Cr431}\\
& \boldsymbol{\Omega}= \boldsymbol{X}\boldsymbol{\Gamma} \boldsymbol{X}^{\top}+\boldsymbol{Z}(\boldsymbol{Z}^{\top}\boldsymbol{Z})^{-1}\boldsymbol{Z}^{\top}\label{Cr432}.
\end{align}
\end{cor}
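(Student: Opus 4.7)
The plan is to specialize Theorem~\ref{thm2} to the case $\boldsymbol{K}_1=\boldsymbol{K}_2=\boldsymbol{K}\in\mathcal{S}^+(k)$ and then exploit the invertibility of $\boldsymbol{K}$ to simplify conditions \eqref{Mcond21}--\eqref{Mcond23}.

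First I would dispose of \eqref{Mcond22}. Specialized to $\boldsymbol{K}_2=\boldsymbol{K}$, it reads $\boldsymbol{K}(\boldsymbol{X}^\top\boldsymbol{X}+\boldsymbol{K})^{-1}\boldsymbol{K}\boldsymbol{\Xi}=\boldsymbol{0}$. Since $\boldsymbol{K}\in\mathcal{S}^+(k)$ and $\boldsymbol{X}^\top\boldsymbol{X}+\boldsymbol{K}\in\mathcal{S}^+(k)$ are both invertible, the factor $\boldsymbol{K}(\boldsymbol{X}^\top\boldsymbol{X}+\boldsymbol{K})^{-1}\boldsymbol{K}$ is nonsingular, so this is equivalent to $\boldsymbol{\Xi}=\boldsymbol{0}$. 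Combining $\boldsymbol{\Xi}=\boldsymbol{0}$ with \eqref{Mcond23} $\boldsymbol{\Delta}=(\boldsymbol{Z}^\top\boldsymbol{Z})^{-1}$ and substituting into the representation \eqref{Orep} produces
\[
\boldsymbol{\Omega}=\boldsymbol{X}\boldsymbol{\Gamma}\boldsymbol{X}^\top+\boldsymbol{Z}(\boldsymbol{Z}^\top\boldsymbol{Z})^{-1}\boldsymbol{Z}^\top,
\]
which is precisely \eqref{Cr432}. Conversely, if \eqref{Cr432} holds, then by the uniqueness part of the representation one reads off $\boldsymbol{\Xi}=\boldsymbol{0}$ and $\boldsymbol{\Delta}=(\boldsymbol{Z}^\top\boldsymbol{Z})^{-1}$, so \eqref{Mcond22} and \eqref{Mcond23} are automatic.

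Next I would reduce \eqref{Mcond21} to \eqref{Cr431}. With $\boldsymbol{\Xi}=\boldsymbol{0}$ we have $\boldsymbol{A}=(\boldsymbol{\Gamma}-\boldsymbol{\Xi}\boldsymbol{\Delta}^{-1}\boldsymbol{\Xi}^\top)^{-1}=\boldsymbol{\Gamma}^{-1}$, so \eqref{Mcond21} with $\boldsymbol{K}_1=\boldsymbol{K}_2=\boldsymbol{K}$ becomes
\[
\boldsymbol{K}(\boldsymbol{\Gamma}^{-1}+\boldsymbol{K})^{-1}\boldsymbol{\Gamma}^{-1}(\boldsymbol{\Gamma}^{-1}+\boldsymbol{K})^{-1}\boldsymbol{K}
=\boldsymbol{K}(\boldsymbol{X}^\top\boldsymbol{X}+\boldsymbol{K})^{-1}\boldsymbol{X}^\top\boldsymbol{X}(\boldsymbol{X}^\top\boldsymbol{X}+\boldsymbol{K})^{-1}\boldsymbol{K}.
\]
Premultiplying and postmultiplying by $\boldsymbol{K}^{-1}$ and then taking the inverse of both sides yields
\[
(\boldsymbol{\Gamma}^{-1}+\boldsymbol{K})\,\boldsymbol{\Gamma}\,(\boldsymbol{\Gamma}^{-1}+\boldsymbol{K})
=(\boldsymbol{X}^\top\boldsymbol{X}+\boldsymbol{K})(\boldsymbol{X}^\top\boldsymbol{X})^{-1}(\boldsymbol{X}^\top\boldsymbol{X}+\boldsymbol{K}).
\]
A direct expansion of both sides gives $\boldsymbol{\Gamma}^{-1}+2\boldsymbol{K}+\boldsymbol{K}\boldsymbol{\Gamma}\boldsymbol{K}$ on the left and $\boldsymbol{X}^\top\boldsymbol{X}+2\boldsymbol{K}+\boldsymbol{K}(\boldsymbol{X}^\top\boldsymbol{X})^{-1}\boldsymbol{K}$ on the right; cancelling $2\boldsymbol{K}$ and rearranging gives exactly \eqref{Cr431}.

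The main (very mild) obstacle is simply this algebraic manipulation of \eqref{Mcond21}; all the reductions above are two-way equivalences because $\boldsymbol{K}$, $\boldsymbol{\Gamma}^{-1}+\boldsymbol{K}$, $\boldsymbol{X}^\top\boldsymbol{X}$, and $\boldsymbol{X}^\top\boldsymbol{X}+\boldsymbol{K}$ are all invertible, so assembling the two directions of Theorem~\ref{thm2} with the specializations above completes the proof.
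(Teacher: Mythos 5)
Your proposal is correct and follows essentially the same route as the paper's proof: specializing Theorem~\ref{thm2} with $\boldsymbol{K}_1=\boldsymbol{K}_2=\boldsymbol{K}$, using the invertibility of $\boldsymbol{K}(\boldsymbol{X}^{\top}\boldsymbol{X}+\boldsymbol{K})^{-1}\boldsymbol{K}$ to get $\boldsymbol{\Xi}=\boldsymbol{0}$ from \eqref{Mcond22}, combining this with \eqref{Mcond23} to obtain \eqref{Cr432}, and then reducing \eqref{Mcond21} via $\boldsymbol{A}=\boldsymbol{\Gamma}^{-1}$, cancellation of $\boldsymbol{K}$, inversion, and expansion to \eqref{Cr431}. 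The algebraic manipulations match the paper's almost line for line, so there is nothing to add.
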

\begin{proof}
Letting $\boldsymbol{K}_1=\boldsymbol{K}_2=\boldsymbol{K}$, we will apply Theorem~\ref{thm2}. First, let us denote $\boldsymbol{A}=(\boldsymbol{\Gamma}-\boldsymbol{\Xi} \boldsymbol{\Delta}^{-1}\boldsymbol{\Xi}^\top)^{-1}$. 
It follows from \eqref{Mcond22} that $\boldsymbol{\Xi}=\boldsymbol{0}$. 
Additionally, $\boldsymbol{\Xi}=\boldsymbol{0}$ and \eqref{Mcond23} are equivalent to $\boldsymbol{\Omega}$ being of the form
\begin{align*}
\boldsymbol{\Omega}= \boldsymbol{X} \boldsymbol{\Gamma} \boldsymbol{X}^\top +\boldsymbol{Z} (\boldsymbol{Z}^\top \boldsymbol{Z})^{-1}\boldsymbol{Z}^\top \quad \mbox{for some} \ \ \boldsymbol{\Gamma} \in \mathcal{S}^+(k).
\end{align*}
Moreover, from $\boldsymbol{\Xi}=\boldsymbol{0}$, it holds that $\boldsymbol{A}=\boldsymbol{\Gamma}^{-1}$. So the equality \eqref{Mcond21} can be rewritten as
\begin{align*}
& \boldsymbol{K} (\boldsymbol{A} + \boldsymbol{K})^{-1} \boldsymbol{A} (\boldsymbol{A} + \boldsymbol{K})^{-1} \boldsymbol{K} = \boldsymbol{K}(\boldsymbol{X}^\top \boldsymbol{X} + \boldsymbol{K})^{-1} \boldsymbol{X}^\top \boldsymbol{X} (\boldsymbol{X}^\top \boldsymbol{X} + \boldsymbol{K})^{-1} \boldsymbol{K}\\
\Leftrightarrow \quad &  (\boldsymbol{\Gamma}^{-1}+ \boldsymbol{K})^{-1} \boldsymbol{\Gamma}^{-1} (\boldsymbol{\Gamma}^{-1} + \boldsymbol{K})^{-1}  = (\boldsymbol{X}^\top \boldsymbol{X} + \boldsymbol{K})^{-1} \boldsymbol{X}^\top \boldsymbol{X} (\boldsymbol{X}^\top \boldsymbol{X} + \boldsymbol{K})^{-1}\\
\Leftrightarrow \quad &(\boldsymbol{\Gamma}^{-1}+ \boldsymbol{K}) \boldsymbol{\Gamma} (\boldsymbol{\Gamma}^{-1} + \boldsymbol{K})  = (\boldsymbol{X}^\top \boldsymbol{X} + \boldsymbol{K})(\boldsymbol{X}^\top \boldsymbol{X})^{-1} (\boldsymbol{X}^\top \boldsymbol{X} + \boldsymbol{K}) \\
\Leftrightarrow \quad & \boldsymbol{\Gamma}^{-1} + 2\boldsymbol{K} + \boldsymbol{K}\boldsymbol{\Gamma} \boldsymbol{K} = \boldsymbol{X}^\top \boldsymbol{X} +2 \boldsymbol{K} + \boldsymbol{K}(\boldsymbol{X}^\top \boldsymbol{X} )^{-1}\boldsymbol{K}\\
\Leftrightarrow \quad & \boldsymbol{K}\{\boldsymbol{\Gamma}-(\boldsymbol{X}^{\top}\boldsymbol{X})^{-1}\}\boldsymbol{K}=\boldsymbol{X}^{\top}\boldsymbol{X}-\boldsymbol{\Gamma}^{-1}.
\end{align*}
This completes the proof.
\end{proof}
\begin{rem}
Let $\boldsymbol{\Omega}$ be expressed in \eqref{Orep}.
If $\boldsymbol{K}=\lambda \boldsymbol{I}_k \in \mathcal{S}^+(k)$, then Corollary~\ref{cor43} implies that $RSS(\boldsymbol{\Omega},\lambda \boldsymbol{I}_k)=RSS(\boldsymbol{I}_n,\lambda \boldsymbol{I}_k) \,\, \mbox{for any} \ \boldsymbol{y} \in \mathbb{R}^n$ holds if and only if the following two conditions both hold:
\begin{align*}
    & \lambda^2 \{\boldsymbol{\Gamma}-(\boldsymbol{X}^{\top}\boldsymbol{X})^{-1}\}=\boldsymbol{X}^{\top}\boldsymbol{X}-\boldsymbol{\Gamma}^{-1},\\
    & \boldsymbol{\Omega}= \boldsymbol{X}\boldsymbol{\Gamma} \boldsymbol{X}^{\top}+\boldsymbol{Z}(\boldsymbol{Z}^{\top}\boldsymbol{Z})^{-1}\boldsymbol{Z}^{\top}.
\end{align*}
See also Section~\ref{sec:5}.
\end{rem}

\section{Examples}\label{sec:5}
In this section, we demonstrate some concrete examples for which the equivalence conditions in Corollary~\ref{cormtpd} (i), (ii) and Corollary~\ref{cor43} hold. 
We consider a few simple cases for which $n=3$ and $k=2$.

\begin{exam}[example for Corollary~\ref{cormtpd} (i)]
Let
\begin{align*}
\boldsymbol{X}=
\begin{pmatrix}
1 & 0 \\
0 & 1 \\
0 & 0 
\end{pmatrix},
\
\boldsymbol{\Omega} =
\begin{pmatrix}
1 & 0 &0 \\
0 & \frac{1}{2} & 0 \\
0 & 0 & a 
\end{pmatrix}, \ 
\boldsymbol{K}_1=
\begin{pmatrix}
1 & 0 \\
0 & 2 
\end{pmatrix},
\ \mbox{and}\ \,
\boldsymbol{K}_2= \boldsymbol{I}_2,
\end{align*}
where $a$ is an arbitrary positive constant.
It follows from $\boldsymbol{K}^{-1}_2=\boldsymbol{I}_2$ that
\begin{eqnarray*}
    \boldsymbol{\Omega} \boldsymbol{X} \boldsymbol{K}^{-1}_2 \boldsymbol{K}_1&=&\begin{pmatrix}
 1 & 0 &0 \\
0 & \frac{1}{2} & 0 \\
0 & 0 & a 
\end{pmatrix}\begin{pmatrix}
1 & 0 \\
0 & 1 \\
0 & 0 
\end{pmatrix}
\begin{pmatrix}
1 & 0 \\
0 & 2 
\end{pmatrix}
\\
&=&\begin{pmatrix}
1 & 0  \\
0 & \frac{1}{2}  \\
0 & 0  
\end{pmatrix}\begin{pmatrix}
1 & 0 \\
0 & 2 
\end{pmatrix}\\
&=&\begin{pmatrix}
1 & 0 \\
0 & 1 \\
0 & 0 
\end{pmatrix}=\boldsymbol{X}.
\end{eqnarray*}
In other words, the equality 
\[ \hat{{\boldsymbol{\beta}}}(\boldsymbol{\Omega}, \boldsymbol{K}_1) = \hat{{\boldsymbol{\beta}}} (\boldsymbol{I}_3, \boldsymbol{K}_2)\left(=\cfrac{1}{2}
\begin{pmatrix}
1 & 0 & 0\\
0 & 1 & 0
\end{pmatrix}\boldsymbol{y}\right) \]
holds for any $\boldsymbol{y} \in \mathbb{R}^3$. 
\end{exam}

\begin{exam}[example for Corollary~\ref{cormtpd} (ii)]
Let
\begin{align*}
\boldsymbol{X}=
\begin{pmatrix}
1 & 2 \\
1 & 0 \\
0 & 1 
\end{pmatrix}, \
\boldsymbol{\Omega} =
\begin{pmatrix}
2 & -1 &-2 \\
-1 & 2 & 2 \\
-2 & 2 & 5 
\end{pmatrix},
\ \mbox{and} \ \,
\boldsymbol{K}=\lambda \boldsymbol{I}_2,
\end{align*}
where $\lambda$ is an arbitrary positive constant.
Then,
\begin{eqnarray*}
    \boldsymbol{\Omega} \boldsymbol{X} &=&\begin{pmatrix}
 2 & -1 &-2 \\
-1 & 2 & 2 \\
-2 & 2 & 5 
\end{pmatrix}\begin{pmatrix}
1 & 2 \\
1 & 0 \\
0 & 1 
\end{pmatrix}
\\
&=&\begin{pmatrix}
1 & 2 \\
1 & 0 \\
0 & 1 
\end{pmatrix}=\boldsymbol{X}.
\end{eqnarray*}
Therefore, the equality 
\[ \hat{{\boldsymbol{\beta}}}(\boldsymbol{\Omega}, \boldsymbol{K}) = \hat{{\boldsymbol{\beta}}} (\boldsymbol{I}_3, \boldsymbol{K})\left(=
\cfrac{1}{(\lambda+1)(\lambda+6)}
\begin{pmatrix}
\lambda+1 & \lambda+5 & -2\\
2(\lambda+1) & -2 & \lambda+2
\end{pmatrix}\boldsymbol{y}\right) \]
holds for any $\boldsymbol{y} \in \mathbb{R}^3$.
\end{exam}

\begin{exam}[example for Corollary~\ref{cor43}]
Let
\begin{align*}
\boldsymbol{X}=
\begin{pmatrix}
1 & 1 \\
1 & 0 \\
0 & 1 
\end{pmatrix}, \
\boldsymbol{Z}=
\begin{pmatrix}
 1 \\
-1  \\
-1 
\end{pmatrix}, \
\boldsymbol{\Omega} =\cfrac{1}{3}
\begin{pmatrix}
 3 & 0 & 0\\
0 & 2 & 1 \\
0 & 1 & 2 
\end{pmatrix},
\ \mbox{and} \ \,
\boldsymbol{K}=\sqrt{3} \boldsymbol{I}_2.
\end{align*}
If we suppose that \ $\boldsymbol{\Gamma} = 3^{-1}\boldsymbol{I}_2$, $\boldsymbol{\Xi}=\boldsymbol{0}$, and $\boldsymbol{\Delta}= 3^{-1}$, the covariance matrix $\boldsymbol{\Omega}$ can be written in the following form:
\begin{align*}
\boldsymbol{\Omega}
=\cfrac{1}{3}\begin{pmatrix}
1 & 1 \\
1 & 0 \\
0 & 1 
\end{pmatrix}
\boldsymbol{I}_2
\begin{pmatrix}
1 & 1 \\
1 & 0 \\
0 & 1
\end{pmatrix}^\top
+ \cfrac{1}{3} \begin{pmatrix}
 1 \\
-1  \\
-1 
\end{pmatrix}
\begin{pmatrix}
 1 \\
-1  \\
-1 
\end{pmatrix}^\top
=\boldsymbol{X}\boldsymbol{\Gamma} \boldsymbol{X}^{\top}+\boldsymbol{Z}(\boldsymbol{Z}^{\top}\boldsymbol{Z})^{-1}\boldsymbol{Z}^{\top},
\end{align*}
which implies that \eqref{Cr432}  is satisfied.\\
Additionally, for \eqref{Cr431}, the following equality
\begin{align*}
\boldsymbol{K}\{\boldsymbol{\Gamma}-(\boldsymbol{X}^{\top}\boldsymbol{X})^{-1}\}\boldsymbol{K}=\boldsymbol{X}^{\top}\boldsymbol{X}-\boldsymbol{\Gamma}^{-1}=
\begin{pmatrix}
-1 & 1 \\
1 & -1 
\end{pmatrix}
\end{align*}
holds. From the above, the equality
\[ RSS(\boldsymbol{\Omega}, \boldsymbol{K})=RSS(\boldsymbol{I}_3,\boldsymbol{K})\left(=\cfrac{1}{6}\,\boldsymbol{y}^{\top}
\begin{pmatrix}
2(3-\sqrt{3}) & -\sqrt{3} & -\sqrt{3}\\
-\sqrt{3}& 12-5\sqrt{3} & 2(3-\sqrt{3})\\
-\sqrt{3} & 2(3-\sqrt{3}) & 12-5\sqrt{3}
\end{pmatrix}\boldsymbol{y}\right) \]
holds for any $\boldsymbol{y} \in \mathbb{R}^3$.
\end{exam}

\section{Concluding remarks}\label{sec:6}
In this paper, we derived necessary and sufficient conditions under which two general ridge estimators coincide and equality between two residual sums of squares holds in a general linear model.
For future directions, it will be interesting to study the case where $\boldsymbol{\Omega}$ is not necessarily positive definite and $\boldsymbol{X}$ is not of full-rank.

\section*{Acknowledgments}
This study was supported in part by Japan Society for the Promotion of Science KAKENHI Grant Number 21K13836.

This is a pre-print of an article published in Statistical Papers.
The final authenticated version is available online at: \url{https://doi.org/10.1007/s00362-024-01644-z}.

\end{document}